\newtheorem{theorem}{Theorem}[section]
\newtheorem{remark}[theorem]{Remark}
\newtheorem{definition}[theorem]{Definition}
\newtheorem{lemma}[theorem]{Lemma}
\newtheorem{corollary}[theorem]{Corollary}
\newtheorem{problem}[theorem]{Problem}
\begin{document}
\textwidth 150mm \textheight 225mm
\title{ Merging the A- and Q-spectral theories for digraphs
\thanks{Supported by the National Natural Science Foundation of China (No. 11871398), the Natural Science Basic Research Plan in Shaanxi Province of China (Program No. 2018JM1032) and China Scholarship Council (No. 201706290182).
 }}
\author{{Weige Xi$^1$,  Wasin So$^{2}$, Ligong Wang$^{1,}\footnote{Corresponding author.}$  }\\
{\small $^1$ Department of Applied Mathematics, School of Science,}\\ {\small  Northwestern Polytechnical University, Xi¡¯an, Shaanxi 710072, China.}\\
{\small $^2$ Department of Mathematics and Statistics, San Jose State University, }\\
{\small San Jose, CA 95192-0103, USA.}\\
{\small E-mail:  xiyanxwg@163.com, lgwangmath@163.com, wasin.so@sjsu.edu}\\}

\date{}
\maketitle
\begin{center}
\begin{minipage}{120mm}
\vskip 0.3cm
\begin{center}
{\small {\bf Abstract}}
\end{center}
{\small  \ Let $G$ be a digraph and $A(G)$
be the adjacency matrix of $G$. Let $D(G)$ be the diagonal matrix with outdegrees of
vertices of $G$. For any real $\alpha\in[0,1]$, Liu et al. \cite{LWCL}
defined the matrix $A_\alpha(G)$ as
$$A_\alpha(G)=\alpha D(G)+(1-\alpha)A(G).$$
The largest modulus of the eigenvalues
of $A_\alpha(G)$ is called the $A_\alpha$ spectral radius of $G$. In this paper, we
determine the digraphs which attain the maximum (or minimum)
$A_\alpha$ spectral radius among all strongly connected digraphs with given parameters such as girth, clique number,
vertex connectivity or arc connectivity. We also discuss a number of open problems.

\vskip 0.1in \noindent {\bf Key Words}: \ Strongly connected digraphs, Spectral extremal problems, Adjacency matrix, Signless Laplacian matrix, $A_\alpha$ spectral radius. \vskip
0.1in \noindent {\bf AMS Subject Classification (2000)}: \ 05C50,15A18}
\end{minipage}
\end{center}

\section{Introduction }

Let
$G=(V(G),E(G))$
be a digraph with vertex set
$V(G)=\{v_1,v_2,\ldots,v_n\}$ and arc set
$E(G)$. A digraph is simple if it has no loops and multiple arcs. A digraph is strongly connected if for every pair of vertices $v_i,v_j\in
V(G)$, there exists a directed path from $v_i$ to
$v_j$. Throughout this paper, we only
consider simple strongly connected digraphs.

Let $\overset{\longrightarrow}{P_{n}}$ and
$\overset{\longrightarrow}{C_{n}}$ denote the directed path and the
directed cycle on $n$ vertices, respectively. Let $\overset{\longleftrightarrow}{K_{n}}$
denote the complete digraph on $n$ vertices in which for two arbitrary distinct vertices
$v_i,v_j\in V(\overset{\longleftrightarrow}{K_{n}})$, there are arcs
$(v_i,v_j)$ and $(v_j,v_i)\in E(\overset{\longleftrightarrow}{K_{n}})$. Suppose
$\overset{\longrightarrow}{P_k}=v_1v_2\dots v_k$, we call $v_1$ the
initial vertex of the directed path
$\overset{\longrightarrow}{P_k}$, and $v_k$ the terminal vertex of the
directed path $\overset{\longrightarrow}{P_k}$.

Let $G$ be a digraph. If $S\subset V(G)$, then we use $G[S]$ to denote the subdigraph of $G$ induced by $S$.
Let $G-v$ be a digraph obtained from $G$ by deleting the vertex $v$ and all arcs incident to $v$. We use $G\pm e$ to denote
the digraph obtained from $G$ by adding/deleting the arc $e\notin E(G)$/$e\in E(G)$. Let $G_1$ and $G_2$ be two disjoint digraphs.
The digraph $G_1\cup G_2$ is the digraph with vertex set $V(G_1)\cup V(G_2)$ and arc set $E(G_1)\cup E(G_2)$.
We denote by $G_1\vee G_2$ the join of $G_1$ and $G_2$, which is the digraph such that $V(G_1\vee G_2) =V(G_1)\cup V(G_2)$
and $E(G_1\vee G_2) =E(G_1)\cup E(G_2)\cup\{(u,v),(v,u): u\in V(G_1) \ \textrm{and} \ v\in V(G_2)\}$.

Let $H$ be a subdigraph of $G$.
If $G[V(H)]$ is a complete subdigraph of $G$, then $H$ is called a clique of $G$. The clique number of a digraph $G$,
denoted by $\omega(G)$, is the maximum value of the numbers of the vertices of the cliques in $G$.
The girth of $G$ is the length of the shortest directed cycle of $G$. For a strongly connected digraph $G=(V(G),E(G))$,
the vertex connectivity of $G$, denoted by $\kappa(G)$, is the minimum
number of vertices whose deletion yields the resulting digraph non-strongly connected. A
set of arcs $S\subset E(G)$ is an arc cut set if $G-S$ is not strongly connected.
The arc connectivity of $G$, denoted by $\kappa'(G)$, is the minimum
number of arcs whose deletion yields the resulting digraph not-strongly connected.

For a digraph $G$, if there is an arc from $v_i$ to $v_j$, we indicate this by writing
$(v_i,v_j)$, call $v_j$ the head of $(v_i,v_j)$, and $v_i$ the tail
of $(v_i,v_j)$, respectively, and $(v_i,v_j)$ is said to be out-incident to $v_i$ and in-incident to $v_j$;
$v_i$ is said to be out-adjacent to $v_j$ and $v_j$ is said to be in-adjacent to $v_i$. A tournament is a directed graph
obtained by assigning a direction for each edge in an undirected complete graph. A transitive
tournament is a tournament $G$ satisfying the following: if $(u, v)\in E(G)$ and
$(v, w)\in E(G)$, then $(u, w)\in E(G)$.

For any vertex $v_i$,
let $N_i^{+}=N^+_{v_i}=\{v_j\in V(G) \mid (v_i,v_j)\in
E(G)\}$ and $N_i^{-}=N^-_{v_i}=\{v_j\in V(G)
\mid (v_j,v_i)\in E(G)\}$ denote the out-neighbors
and in-neighbors of $v_i$, respectively. Let $d_i^{+}=d_{v_i}^+=|N_i^{+}|$
denote the outdegree of the vertex $v_i$, and $d_i^{-}=d_{v_i}^-=|N_i^{-}|$
denote the indegree of the vertex $v_i$ in the digraph $G$. The minimum outdegree is denoted by $\delta^+$,
the maximum outdegree is denoted by $\Delta^+$ and the minimum indegree by $\delta^-$. A digraph
is $r$-regular if all vertices have outdegree $r$ and indegree $r$.

For a digraph $G$, let
$A(G)=(a_{ij})_{n\times n}$ be the adjacency matrix
of $G$, where $a_{ij}=1$ if $(v_i,v_j)\in
E(G)$ and $a_{ij}=0$ otherwise. Let
$D(G)$ be the diagonal matrix with outdegrees of
vertices of a digraph $G$. In this paper we study hybrids of $A(G)$ and $D(G)$
similar to the signless Laplacian matrix $Q(G)=D(G)+A(G)$, which has been
 extensively studied since then. For detailed coverage
of this research see \cite{HoYo,LaWa,LWZ,LWM,XiWa1,XiWa2}, and their references.
The study of $Q(G)$ has
shown that it is a remarkable matrix, and unique in many respects. Yet, $Q(G)$ is just
the sum of $A(G)$ and $D(G)$. To understand to what extent each of the summands
$A(G)$ and $D(G)$ determines the properties of $Q(G)$, Liu et al. \cite{LWCL}
defined the matrix $A_\alpha(G)$ as

$$A_\alpha(G)=\alpha D(G)+(1-\alpha)A(G), \ \ 0\leq \alpha\leq1.$$
Many facts suggest that the study of the family $A_\alpha(G)$ is long due. Our inspiration comes from the paper of Nikiforov \cite{Niki}. First, we note that
$$A(G)=A_0(G), \ \ \  D(G)=A_1(G), \ \ \ \textrm{and} \ \ \ Q(G)=2A_{\frac{1}{2}}(G).$$
Since $A_{\frac{1}{2}}(G)$ is essentially equivalent to $Q(G)$, in this paper
we take $A_{\frac{1}{2}}(G)$ as
an exact substitute for $Q(G)$. With this caveat, one sees that $A_\alpha(G)$ seamlessly
joins $A(G)$ with $Q(G)$, and we may study the adjacency spectral
properties and signless Laplacian spectral properties of a digraph in a unified way.
The spectral radius of
$A_\alpha(G)$ i.e., the largest modulus of the eigenvalues
of $A_\alpha(G)$, is called the $A_\alpha$ spectral radius of $G$, denoted by
$\lambda_\alpha(G)$. The $A_\alpha$ spectral radius of  undirected graphs has been studied in the literature, see \cite{GZ,LHX,LCM,Niki,NPRS,NR,XLLS}.
Recently, Liu et al. \cite{LWCL} characterized
the extremal digraph which attains the maximum $A_\alpha$ spectral radius among all strongly connected
digraphs with given dichromatic number. We are interested in the $A_\alpha$ spectral radius of digraphs with given other parameters.

If $\alpha=1$, $A_1(G)=D(G)$ the diagonal matrix with outdegrees of
vertices of $G$ which is not interesting. So we only consider the cases $0\leq \alpha<1$ for the rest of this paper. If $G$ is a
strongly connected digraph, then it follows from the Perron Frobenius Theorem \cite{HJ} that $\lambda_\alpha(G)$ is an eigenvalue of $A_\alpha(G)$,
and there is a unique positive unit eigenvector corresponding to $\lambda_\alpha(G)$. The positive unit eigenvector
corresponding to $\lambda_\alpha(G)$ is called the Perron vector of $A_\alpha(G)$. See more details on the Perron vector of $A_\alpha(G)$ in Section 2.

One of the central issues in extremal spectra graph theory is: for a graph matrix, determine the maximum
or minimum spectral radius over various families of graphs. For example, among all strongly connected digraphs on $n$ vertices,
$\overset{\longrightarrow}{C_{n}}$ is the unique digraph with the minimum $A_0$ spectral radius and $A_{\frac{1}{2}}$ spectral radius, and
$\overset{\longleftrightarrow}{K_{n}}$ is the unique digraph with the maximum $A_0$ spectral radius and $A_{\frac{1}{2}}$ spectral radius. The same result
is also true for the $A_\alpha$ spectral radius of $G$, see Corollary \ref{co:c2}. The main goal of this paper is to extend some results
on maximum or minimum $A_0$ spectral radius and $A_{\frac{1}{2}}$ spectral radius for all $\alpha\in[0,1)$.

The rest of the paper is structured as follows. In the next section we introduce
some lemmas and give basic facts about the $A_\alpha$ spectral radius of $G$. In Section 3,
we characterize the extremal digraph which achieves the minimum $A_\alpha$ spectral radius among all strongly connected
digraphs with given girth. In Section 4, we determine the extremal digraph which attains the minimum $A_\alpha$ spectral radius among all strongly connected
digraphs with given clique number. In Section 5, we characterize the extremal digraphs
which achieve the maximum $A_\alpha$ spectral radius among all strongly connected digraphs with given
vertex connectivity. In Section 6, we characterize the extremal digraphs
which achieve the maximum $A_\alpha$ spectral radius among all strongly connected digraphs with given
arc connectivity.

\section{Preliminaries }

 In this section, we give some lemmas which will be used in the following sections.

Let $\sigma(\cdot)$ denote
the spectrum of a square matrix including algebraic multiplicity. Let $\rho(\cdot)$ denote the
spectral radius of a square matrix.

\noindent\begin{lemma}\label{le:1} (\cite{HJ}) \ Let $M=(m_{ij})$
be an $n \times n$ nonnegative matrix, $R_i(M)$ be the $i$-th row sum of $M$, i.e.,
$R_i(M)=\sum\limits_{j=1}^n m_{ij} \ (1 \leq i\leq n)$. Then
$$\min\{R_{i}(M):1 \leq i\leq n\}\leq \rho(M)\leq \max\{R_{i}(M):1
\leq i\leq n\}.$$
 Moreover, if $M$ is irreducible, then either one equality holds  if
and only if $R_1(M)=R_2(M)=\ldots=R_n(M)$.
\end{lemma}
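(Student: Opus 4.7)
The plan is to treat the two inequalities separately and then combine a Perron eigenvector argument for the equality case. Since $M$ is a nonnegative matrix, the all-ones vector $\mathbf{1}=(1,1,\ldots,1)^T$ is the natural test vector because $(M\mathbf{1})_i = R_i(M)$.

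For the upper bound I would invoke the fact that the maximum row sum is precisely the $\ell_\infty$-operator norm $\|M\|_\infty$, and the spectral radius is dominated by any operator norm, giving $\rho(M)\le \max_i R_i(M)$. Alternatively, one can give a direct argument: from $M\mathbf{1}\le (\max_i R_i(M))\mathbf{1}$ componentwise and the monotonicity of multiplication by the nonnegative matrix $M$, one obtains $M^k\mathbf{1}\le (\max_i R_i(M))^k\mathbf{1}$ for every $k$; taking $\ell_\infty$-norms and $k$-th roots, Gelfand's formula $\rho(M)=\lim_k\|M^k\|^{1/k}$ delivers the bound.

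For the lower bound I would use the Collatz--Wielandt characterization for nonnegative matrices,
$$\rho(M)=\max_{x\ge 0,\,x\neq 0}\ \min_{i:\,x_i>0}\ \frac{(Mx)_i}{x_i},$$
and plug in $x=\mathbf{1}$ to obtain $\rho(M)\ge \min_i R_i(M)$. Equivalently, one can apply the upper-bound argument to $M^T$ after noting that $\rho(M^T)=\rho(M)$; however the Collatz--Wielandt route is cleaner since it avoids switching to column sums.

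For the equality statement, assume $M$ is irreducible. By the Perron--Frobenius theorem there is a positive left eigenvector $y>0$ with $y^TM=\rho(M)y^T$. If $\rho(M)=\max_i R_i(M)$, then
$$\rho(M)\,(y^T\mathbf{1})=y^TM\mathbf{1}=\sum_i y_i R_i(M)\le (\max_i R_i(M))(y^T\mathbf{1})=\rho(M)(y^T\mathbf{1}),$$
and equality throughout together with $y_i>0$ for all $i$ forces $R_i(M)=\rho(M)$ for every $i$, i.e.\ all row sums are equal. The lower-bound equality is handled symmetrically by reversing the inequality. The converse direction is immediate since if all $R_i(M)$ are equal to some constant $c$, then $M\mathbf{1}=c\mathbf{1}$, whence $c$ is an eigenvalue and in fact $c=\rho(M)$ by the two bounds already established. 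There is no real technical obstacle here; the only subtle point is the appeal to irreducibility to guarantee strict positivity of the left Perron eigenvector, which is exactly what lets a componentwise inequality $R_i(M)\le\max_j R_j(M)$ promote to the equality $R_i(M)=\max_j R_j(M)$ for every $i$.
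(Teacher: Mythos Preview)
Your argument is correct and is essentially the standard textbook proof. Note, however, that the paper does not supply its own proof of this lemma: it is stated with a citation to Horn and Johnson, \emph{Matrix Analysis}, and used as a black box. So there is no ``paper's proof'' to compare against beyond observing that your approach is the classical one found in that reference.

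One small remark: for the lower bound you invoke the Collatz--Wielandt identity for general nonnegative matrices. Strictly speaking the \emph{equality} form of that characterization is usually stated for irreducible matrices; for reducible $M$ the supremum need not be attained. But you only need the inequality direction, namely that $M\mathbf{1}\ge(\min_i R_i(M))\mathbf{1}$ implies $\rho(M)\ge\min_i R_i(M)$, and this follows for arbitrary nonnegative $M$ by iterating to get $M^k\mathbf{1}\ge(\min_i R_i(M))^k\mathbf{1}$ and applying Gelfand's formula, exactly parallel to your upper-bound argument. With that minor clarification the proof is complete.
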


\begin{definition}\label{de:2} (\cite{LWe}) Let $M$ be a real matrix of order $n$ described in the following block
form
$$
M=\left(
  \begin{array}{cccc}
     M_{11} & M_{12} & \cdots & M_{1t}\\
     M_{21} &M_{22} & \cdots & M_{2t}\\
    \vdots  & \vdots   & \ddots & \vdots\\
     M_{t1} & M_{t2} & \cdots & M_{tt}\\
  \end{array}
\right),
$$
where the diagonal blocks $M_{ii}$ are $a_i\times a_i$ matrices for any $i\in\{1, 2,\cdots, t\}$ and $n=a_1+\cdots+a_t$.
For any $i, j\in\{1, 2,\cdots, t\}$, let $b_{ij}$ denote the average row sum of $M_{ij}$ , i.e. $b_{ij}$ is the sum of
all entries in $M_{ij}$ divided by the number of rows. Then $B(M) = (b_{ij})$ is called
the quotient matrix of $M$. If in addition for each pair $i, j$, $M_{ij}$ has constant row sum, then
$B(M)$ is called the equitable quotient matrix of $M$.
\end{definition}

\begin{lemma}\label{le:c1} (\cite{YYWX}) Let $M=(m_{ij})_{n\times n}$ be defined as above, and for any $i, j\in \{1, 2,\ldots, t\}$, the row
sum of each block $M_{ij}$ be constant. Let $B= (b_{ij})$ be the equitable quotient matrix of $M$. Then $\sigma(B)\subset\sigma(M)$.
Moreover, if $M=(m_{ij})_{n\times n}$ is a nonnegative matrix, then $\rho(B)=\rho(M)$.
\end{lemma}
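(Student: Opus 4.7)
The plan is to use the \emph{characteristic matrix} $S \in \{0,1\}^{n \times t}$ of the partition, whose $i$-th column is the indicator of the $i$-th block $V_i$. The constant row-sum hypothesis on each $M_{ij}$ translates directly into the single identity $MS = SB$, which drives both halves of the argument.

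For $\sigma(B) \subset \sigma(M)$ with algebraic multiplicities, I would exploit that $MS = SB$ makes $\mathrm{col}(S)$ an $M$-invariant subspace of dimension $t$ (the columns of $S$ have disjoint supports, hence are linearly independent). Completing the columns of $S$ to a basis of $\mathbb{R}^n$ then puts $M$ into block upper-triangular form with top-left block similar to $B$, since $B$ represents $M|_{\mathrm{col}(S)}$ in the basis of columns of $S$. Factoring the characteristic polynomial yields $\det(\lambda I - B) \mid \det(\lambda I - M)$, which is exactly the multiset inclusion claimed.

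For $\rho(B) = \rho(M)$ when $M \ge 0$, the inclusion already gives $\rho(B) \le \rho(M)$, so only the reverse inequality requires work. My plan is a Collatz--Wielandt argument. Pick a Perron eigenvector $x \ge 0$ of $M$ and define $y \in \mathbb{R}^t$ by $y_i := \max_{k \in V_i} x_k$. For any $k \in V_i$,
$$
\rho(M)\, x_k = (Mx)_k = \sum_{j=1}^t \sum_{l \in V_j} m_{kl}\, x_l \;\leq\; \sum_{j=1}^t b_{ij}\, y_j = (By)_i,
$$
using the constant row-sum identity $\sum_{l \in V_j} m_{kl} = b_{ij}$ together with $x_l \le y_j$ for $l \in V_j$. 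Maximizing over $k \in V_i$ gives $By \ge \rho(M)\, y$ componentwise with $y \ge 0$, $y \neq 0$; iterating under $B \ge 0$ yields $B^k y \ge \rho(M)^k y$ and hence $\rho(B) \ge \rho(M)$ via Gelfand's formula.

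The main obstacle is identifying the right aggregated vector $y$: the naive choice $y_i = \sum_{k \in V_i} x_k$ does not satisfy the required inequality, because the constant row-sum hypothesis controls \emph{row}-wise sums inside each block, not column-wise. The pointwise majorant $y_i = \max_{k \in V_i} x_k$ is the natural object because it dominates each $x_l$ componentwise, which is exactly what is needed to bound $\sum_{l \in V_j} m_{kl} x_l$ by $b_{ij}\, y_j$ via the row-sum identity.
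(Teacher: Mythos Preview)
The paper does not supply its own proof of this lemma; it is quoted from \cite{YYWX} and used as a black box. So there is nothing in the present paper to compare your argument against directly.

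Your argument is correct. The intertwining identity $MS=SB$ is the right engine, and the block-triangularisation via an $M$-invariant subspace cleanly yields the divisibility $\det(\lambda I_t-B)\mid\det(\lambda I_n-M)$, hence the multiset inclusion $\sigma(B)\subset\sigma(M)$. For the second half, your Collatz--Wielandt bound with $y_i=\max_{k\in V_i}x_k$ is valid: nonnegativity of $B$ (inherited from $M\ge 0$) lets you iterate $By\ge\rho(M)y$ to $B^ky\ge\rho(M)^ky$, and Gelfand's formula finishes. Your diagnosis of why the ``sum'' aggregation $y=S^{T}x$ fails is also accurate: $MS=SB$ gives no control over $S^{T}M$.

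That said, there is a shorter route to $\rho(B)\ge\rho(M)$ that avoids the max-aggregation entirely and uses only $MS=SB$ once more. Take a nonnegative \emph{left} Perron vector $w^{T}$ of $M$, so $w^{T}M=\rho(M)\,w^{T}$. Then
\[
(w^{T}S)\,B \;=\; w^{T}(SB) \;=\; w^{T}(MS) \;=\; \rho(M)\,(w^{T}S),
\]
and $w^{T}S$ is nonzero and nonnegative (each column of $S$ has disjoint nonempty support). Thus $\rho(M)$ is itself an eigenvalue of $B$, giving $\rho(M)\le\rho(B)$ immediately. This is essentially the argument one finds in standard treatments of equitable partitions, and it sidesteps the need for any inequality or iteration. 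Your approach has the compensating advantage that it only uses a \emph{right} Perron vector of $M$, which is the object already in play throughout the paper (cf.\ Lemma~\ref{le:3}).
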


\begin{definition}\label{de:1} (\cite{BP,HJ}) Let $A = (a_{ij})$ and $B = (b_{ij})$ be $n\times m$ matrices.
If $a_{ij} \leq b_{ij}$ for all $i$ and $j$, then $A \leq B$. If $ A \leq B$ and $A\neq B$, then $A < B$.
If $a_{ij}< b_{ij}$ for all $i$ and $j$, then $A\ll B$.
\end{definition}

\begin{lemma}\label{le:2} (\cite{BP,HJ}) Let $A$ and $B$ be nonnegative matrices. If $0 \leq A \leq B$,
then $\rho(A) \leq \rho(B)$. Furthermore, if $B$ is irreducible and $0\leq A< B$, then $\rho(A)<\rho(B)$.
\end{lemma}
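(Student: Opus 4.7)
The result is a standard Perron--Frobenius type comparison, and the plan is to handle the two parts separately: the weak inequality by a soft monotonicity argument, and the strict inequality by exploiting irreducibility through the Perron vector of $B$.

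For the first part, I would appeal to Gelfand's formula $\rho(M)=\lim_{k\to\infty}\|M^k\|^{1/k}$, valid for any matrix norm. A routine induction shows that $0\leq A\leq B$ implies $0\leq A^k\leq B^k$ entrywise, since each entry of $A^k$ is a sum of products of nonnegative entries of $A$ majorized by the corresponding sum for $B$. Choosing a monotone norm such as $\|\cdot\|_\infty$ then gives $\|A^k\|\leq\|B^k\|$, and taking $k$-th roots and passing to the limit yields $\rho(A)\leq\rho(B)$.

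For the strict inequality, I would argue by contradiction. Suppose $\rho(A)=\rho(B)=:r$. Since $A\geq 0$, Perron--Frobenius provides a nonnegative nonzero eigenvector $u\geq 0$ of $A^T$ with $A^T u = r u$, and then $B^T u \geq A^T u = r u$. Because $B$ is irreducible, it has a strictly positive right Perron vector $w>0$ with $Bw=rw$, equivalently $w^T B^T = r w^T$. Pairing yields $w^T(B^T u - r u)=0$; since $w>0$ and $B^T u - r u\geq 0$, this forces $B^T u = r u$, and so $(B^T - A^T)u = 0$.

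The main obstacle, and the point at which irreducibility is essential, is upgrading $u\geq 0$ to $u>0$. Set $S=\{i:u_i=0\}$; if $S\neq\emptyset$, then $(B^T u)_i = 0$ for each $i\in S$ forces $B_{ki}=0$ whenever $u_k>0$. In digraph terms this says there are no arcs from $V\setminus S$ into $S$, contradicting the strong connectedness of the digraph of the irreducible matrix $B$. Hence $u>0$, and then $(B-A)^T u=0$ with $B-A\geq 0$ and $u>0$ forces $B-A=0$, contradicting $A<B$. This yields $\rho(A)<\rho(B)$, completing the proof.
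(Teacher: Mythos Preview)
Your proof is correct. Note, however, that the paper does not actually prove this lemma: it is quoted as a standard result from the references \cite{BP,HJ} (Berman--Plemmons and Horn--Johnson) and used as a black box. So there is no ``paper's proof'' to compare against; you have supplied a valid self-contained argument where the authors simply cite the literature.
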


By Lemma \ref{le:2}, we have the following results in terms of $A_\alpha$ spectral radius of digraphs.

\begin{corollary}\label{co:1} Let $G$ be a digraph and $H$ be a spanning subdigraph of $G$. Then

(i) $\lambda_\alpha(G)\geq \lambda_\alpha(H)$.

(ii) If $G$ is strongly connected, and $H$ is a proper subdigraph of $G$,
then $\lambda_\alpha(G)>\lambda_\alpha(H)$.
\end{corollary}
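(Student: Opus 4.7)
The plan is to deduce both parts directly from Lemma~\ref{le:2} by comparing $A_\alpha(G)$ and $A_\alpha(H)$ entrywise. Since $H$ is a spanning subdigraph of $G$, we have $V(H)=V(G)$ and $E(H)\subseteq E(G)$. Hence for every pair $i,j$, $a_{ij}(H)\le a_{ij}(G)$, so $A(H)\le A(G)$ in the sense of Definition~\ref{de:1}. Moreover, the outdegree of each vertex in $H$ is at most its outdegree in $G$, so $D(H)\le D(G)$. Because $\alpha\in[0,1)$ (both coefficients are nonnegative), taking the $\alpha$-convex combination preserves the inequality and gives
\[
0\le A_\alpha(H)=\alpha D(H)+(1-\alpha)A(H)\le \alpha D(G)+(1-\alpha)A(G)=A_\alpha(G).
\]
Since these matrices are nonnegative, $\lambda_\alpha(G)=\rho(A_\alpha(G))$ and $\lambda_\alpha(H)=\rho(A_\alpha(H))$ (by Perron--Frobenius, noting $\lambda_\alpha$ is defined as the largest modulus). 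Applying Lemma~\ref{le:2} yields $\lambda_\alpha(H)\le \lambda_\alpha(G)$, which proves (i).

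For (ii), I need to upgrade the inequality to a strict one. First, observe that $A_\alpha(G)$ is irreducible: since $G$ is strongly connected, $A(G)$ is irreducible, and for $\alpha<1$ the off-diagonal pattern of $A_\alpha(G)=\alpha D(G)+(1-\alpha)A(G)$ coincides with that of $A(G)$ (the diagonal matrix $\alpha D(G)$ does not affect off-diagonal entries and $(1-\alpha)>0$), so the associated digraph of $A_\alpha(G)$ is strongly connected and $A_\alpha(G)$ is irreducible. Next, since $H$ is a proper spanning subdigraph, there exists at least one arc $(v_i,v_j)\in E(G)\setminus E(H)$. Then $a_{ij}(H)<a_{ij}(G)$, so $A_\alpha(H)$ and $A_\alpha(G)$ differ in the $(i,j)$ off-diagonal entry (for $\alpha<1$), which guarantees $A_\alpha(H)<A_\alpha(G)$ strictly in the sense of Definition~\ref{de:1}. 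Now the strict version of Lemma~\ref{le:2} applies and gives $\lambda_\alpha(H)<\lambda_\alpha(G)$.

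There is no real obstacle here; the only mild subtlety is checking that the irreducibility of $A(G)$ transfers to $A_\alpha(G)$ for the whole range $\alpha\in[0,1)$ (so that the strict case of Lemma~\ref{le:2} is actually usable), and that removing even a single arc already produces a strict entrywise drop in $A_\alpha$—both handled by the observation that the coefficient $1-\alpha$ in front of $A(G)$ is strictly positive.
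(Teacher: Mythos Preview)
Your proof is correct and follows exactly the approach the paper intends: the paper simply asserts that the corollary follows from Lemma~\ref{le:2} without spelling out details, and your argument is precisely the natural expansion of that claim, including the check that $A_\alpha(G)$ inherits irreducibility from $A(G)$ when $\alpha<1$.
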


From Lemma \ref{le:1} and Corollary \ref{co:1}, we can easily get the following corollary.

\begin{corollary}\label{co:c2} Let $G$ be a strongly connected digraph. Then
$1\leq\lambda_\alpha(G)\leq n-1$, $\lambda_\alpha(G)=n-1$ if and only if $G\cong\overset{\longleftrightarrow}{K_{n}}$, and
$\lambda_\alpha(G)=1$ if and only if $G\cong\overset{\longrightarrow}{C_{n}}$.
\end{corollary}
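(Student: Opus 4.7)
The plan is to separately handle the upper and lower bounds of Corollary \ref{co:c2}, using the row-sum bound of Lemma \ref{le:1} to pin down the extremal values and Corollary \ref{co:1} to enforce uniqueness.

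First I would note the key row-sum computation: since each row of $D(G)$ has a single entry $d_i^+$ and each row of $A(G)$ sums to $d_i^+$, the $i$-th row sum of $A_\alpha(G)$ is
\[
R_i(A_\alpha(G)) \;=\; \alpha\, d_i^+ + (1-\alpha)\, d_i^+ \;=\; d_i^+.
\]
For the upper bound, $G$ is a spanning subdigraph of $\overset{\longleftrightarrow}{K_n}$, so Corollary \ref{co:1}(i) gives $\lambda_\alpha(G)\le \lambda_\alpha(\overset{\longleftrightarrow}{K_n})$. In $\overset{\longleftrightarrow}{K_n}$ every outdegree equals $n-1$, so all row sums of $A_\alpha(\overset{\longleftrightarrow}{K_n})$ equal $n-1$, and Lemma \ref{le:1} yields $\lambda_\alpha(\overset{\longleftrightarrow}{K_n})=n-1$. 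If $G\not\cong \overset{\longleftrightarrow}{K_n}$, then $G$ is a proper spanning subdigraph of $\overset{\longleftrightarrow}{K_n}$, so Corollary \ref{co:1}(ii) forces the strict inequality $\lambda_\alpha(G)<n-1$.

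For the lower bound, strong connectivity gives $d_i^+\ge 1$ for every $i$, hence every row sum of $A_\alpha(G)$ is at least $1$, and Lemma \ref{le:1} yields $\lambda_\alpha(G)\ge 1$. For the equality case I would use that $A_\alpha(G)$ is irreducible (for $\alpha\in[0,1)$ its off-diagonal pattern agrees with that of $A(G)$, which is irreducible since $G$ is strongly connected), so by the moreover part of Lemma \ref{le:1} equality $\lambda_\alpha(G)=1$ forces $R_1=R_2=\cdots=R_n=1$, i.e.\ $d_i^+=1$ for every $i$. A strongly connected digraph in which every vertex has outdegree one is easily seen to be $\overset{\longrightarrow}{C_n}$ (following the unique out-neighbour from any vertex produces a cycle, which by strong connectivity must cover all of $V(G)$). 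Conversely, $\overset{\longrightarrow}{C_n}$ has all row sums equal to $1$, so Lemma \ref{le:1} gives $\lambda_\alpha(\overset{\longrightarrow}{C_n})=1$.

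There is no real obstacle here; the only point that needs a line of justification is that $A_\alpha(G)$ is irreducible for $\alpha<1$, which is needed both for invoking the equality clause of Lemma \ref{le:1} in the lower-bound case and for applying Corollary \ref{co:1}(ii) in the upper-bound case.
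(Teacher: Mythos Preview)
Your proof is correct and follows exactly the route the paper intends: the paper states that the corollary follows ``from Lemma \ref{le:1} and Corollary \ref{co:1}'' without giving further details, and your argument fills in precisely those details via the row-sum identity $R_i(A_\alpha(G))=d_i^+$. The only quibble is that you need not separately justify irreducibility to invoke Corollary \ref{co:1}(ii), since that corollary already has strong connectivity as its hypothesis; irreducibility is genuinely needed only for the equality clause of Lemma \ref{le:1} in the lower-bound case.
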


\noindent\begin{lemma}\label{le:3} (\cite{BP}) Let $B$ be nonnegative matrices and
$X=(x_1,x_2,$ $\ldots,x_n)^T$ be any nonzero nonnegative vector. If $\beta\geq0$ such that $BX\geq\beta X$, then $\rho(B)\geq\beta$. Furthermore,
if $B$ is irreducible and $BX>\beta X$, then $\rho(B)>\beta$.
\end{lemma}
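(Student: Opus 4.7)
The plan is to reduce both parts of the lemma to the Perron--Frobenius theorem for irreducible nonnegative matrices, and then to handle a possibly reducible $B$ in the first part by a continuity/perturbation argument.

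First I would dispose of the case when $B$ is irreducible, which in particular covers the second assertion. By Perron--Frobenius there is a strictly positive left eigenvector $Y\gg 0$ of $B$ with $Y^TB=\rho(B)Y^T$. Multiplying the hypothesis $BX\geq\beta X$ on the left by $Y^T$ gives
\[
\rho(B)(Y^TX)=(Y^TB)X\geq \beta(Y^TX).
\]
Since $Y\gg 0$ and $X\geq 0$, $X\neq 0$, the scalar $Y^TX$ is strictly positive, and dividing through yields $\rho(B)\geq \beta$. For the strict part, the hypothesis $BX>\beta X$, interpreted via Definition \ref{de:1} as $BX-\beta X\geq 0$ with at least one strictly positive coordinate, combined with $Y\gg 0$ upgrades $Y^T(BX-\beta X)\geq 0$ to a strict inequality, so $\rho(B)>\beta$.

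To remove the irreducibility assumption in the non-strict assertion, I would perturb: set $B_\varepsilon=B+\varepsilon J$, where $J$ is the all-ones matrix and $\varepsilon>0$. Then $B_\varepsilon\gg 0$, hence irreducible, and $B_\varepsilon X\geq BX\geq \beta X$, so the irreducible case already handled yields $\rho(B_\varepsilon)\geq \beta$. Letting $\varepsilon\to 0^+$ and invoking continuity of the spectral radius on the space of $n\times n$ matrices gives $\rho(B)\geq \beta$.

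The only step that carries any real content is the existence of a strictly positive left Perron eigenvector for an irreducible nonnegative matrix, which is the standard Perron--Frobenius input; once that vector is in hand, everything reduces to pairing it with $X$ and exploiting the sign conditions. I do not foresee a genuine obstacle, and the two cases (non-strict with general $B$, strict with irreducible $B$) split cleanly as above.
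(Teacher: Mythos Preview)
Your argument is correct. The paper itself does not prove this lemma; it is quoted from Berman and Plemmons \cite{BP} without proof, so there is no proof in the paper to compare against. Your approach---pairing with a strictly positive left Perron eigenvector in the irreducible case, and handling the general nonnegative case by the perturbation $B_\varepsilon=B+\varepsilon J$ followed by continuity of the spectral radius---is the standard textbook route and is exactly what one finds in \cite{BP}.
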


By Lemma \ref{le:3}, we have the following results in terms of $A_\alpha$ spectral radius of digraphs.

\begin{corollary}\label{co:c3} Let $G$ be a strongly connected digraph. Then
$\lambda_\alpha(G)> \alpha\Delta^+$.
\end{corollary}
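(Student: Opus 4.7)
The plan is to apply Lemma \ref{le:3} directly to $A_\alpha(G)$ using a cleverly chosen, very sparse test vector. Since $G$ is strongly connected, the adjacency matrix $A(G)$ is irreducible; because $\alpha<1$, the off-diagonal pattern of $A_\alpha(G)=\alpha D(G)+(1-\alpha)A(G)$ coincides with that of $A(G)$, so $A_\alpha(G)$ is a nonnegative irreducible matrix with $\rho(A_\alpha(G))=\lambda_\alpha(G)$. This puts us in the strict-inequality case of Lemma \ref{le:3}, provided we can produce a nonzero nonnegative $X$ with $A_\alpha(G)X>\alpha\Delta^+ X$.

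Next I would pick a vertex $u$ realizing the maximum outdegree, i.e.\ $d_u^+=\Delta^+$, and take $X$ to be the standard basis vector $e_u$. Computing componentwise, the $u$-th entry of $A_\alpha(G)X$ comes only from the diagonal term and equals $\alpha d_u^+=\alpha\Delta^+$, matching $(\alpha\Delta^+ X)_u$ exactly. For any other vertex $v_i\neq u$, the $i$-th entry of $A_\alpha(G)X$ equals $(1-\alpha)a_{iu}$, while the $i$-th entry of $\alpha\Delta^+ X$ is $0$; since $1-\alpha>0$, this gives $\geq$ everywhere.

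To upgrade this to strict inequality at some coordinate, I would invoke strong connectivity: for $n\geq 2$, every vertex (in particular $u$) has at least one in-neighbor $w$, and then $(A_\alpha(G)X)_w=(1-\alpha)>0=(\alpha\Delta^+ X)_w$. Hence $A_\alpha(G)X>\alpha\Delta^+ X$ in the sense of Lemma \ref{le:3}, and irreducibility of $A_\alpha(G)$ yields $\lambda_\alpha(G)=\rho(A_\alpha(G))>\alpha\Delta^+$, as required.

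There is no substantive obstacle here; the only things to check carefully are (a) that $A_\alpha(G)$ inherits irreducibility from $A(G)$ under the hypothesis $\alpha<1$, and (b) that strong connectivity forces $u$ to have a nonzero in-neighbor so the strict inequality is genuinely achieved at some coordinate. (The degenerate case $n=1$ is excluded because Corollary \ref{co:c2} already asserts $\lambda_\alpha(G)\geq 1$, implicitly requiring $n\geq 2$.)
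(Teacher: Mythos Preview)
Your proof is correct and follows essentially the same approach as the paper: both pick the standard basis vector $e_u$ at a vertex $u$ of maximum outdegree, use strong connectivity to guarantee $d_u^-\geq 1$ (hence a strict coordinate in $A_\alpha(G)X\geq \alpha\Delta^+ X$), and invoke the irreducible case of Lemma~\ref{le:3}. Your write-up is simply more explicit about the componentwise verification and about why $A_\alpha(G)$ inherits irreducibility when $\alpha<1$.
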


\begin{proof} Without loss of generality, let $d^+_u=\Delta^+$. Taking $X=(0,0,\ldots,0,1,0\ldots,0)^T$,
that is, all the entries of $X$ are 0 except $x_u=1$, where $x_u$ corresponding to the vertex $u$. Since
$G$ is strongly connected, then $d^-_u\geq1$ and $A_\alpha(G)$ is nonnegative irreducible. Hence $A_\alpha(G)X>\alpha\Delta^+ X$.
Therefore, by Lemma \ref{le:3}, we have $\lambda_\alpha(G)> \alpha\Delta^+$.
\end{proof}

In the rest of this section, let $X=(x_1,x_2,\ldots,x_n)^T$ be the unique positive unit eigenvector
corresponding to the $A_\alpha$ spectral radius
$\lambda_\alpha(G)$, where $x_i$ corresponds to the vertex $v_i$.

\noindent\begin{lemma}\label{le:4} Let $G=(V(G),E(G))$
be a strongly connected digraph on $n$ vertices, $v_p, v_q$ be two distinct vertices of $V(G)$. Suppose that $v_1,v_2,\ldots,v_t\in N^-_{v_p}\setminus\{N^-_{v_q}\cup\{v_q\}\}$, where $1\leq t\leq d_p^-$.
Let $H=G-\{(v_i,v_p): i=1,2\ldots,t\}+\{(v_i,v_q): i=1,2\ldots,t\}$. If $x_{v_q}\geq x_{v_p}$, then $\lambda_\alpha(H)\geq \lambda_\alpha(G)$.
Furthermore, if $H$ is strongly connected and $x_{v_q}>x_{v_p}$, then $\lambda_\alpha(H)>
\lambda_\alpha(G)$.
\end{lemma}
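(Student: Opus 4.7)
The plan is to apply the Collatz--Wielandt-type bound of Lemma \ref{le:3} to $A_\alpha(H)$ using the Perron vector $X$ of $A_\alpha(G)$ as the test vector. The target inequality is $A_\alpha(H)X \geq \lambda_\alpha(G)X$ componentwise.

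The first (and crucial) observation I would record is that the rewiring $G\to H$ preserves every outdegree: each arc removed, $(v_i,v_p)$, and each arc added, $(v_i,v_q)$, has the same tail $v_i$, so $d_H^+(v_i)=d_G^+(v_i)$; the vertices $v_p$ and $v_q$ are never tails of modified arcs, so their outdegrees also remain unchanged. Consequently $D(H)=D(G)$, and therefore
$$A_\alpha(H)-A_\alpha(G)=(1-\alpha)\bigl(A(H)-A(G)\bigr),$$
where the right-hand side has entry $-1$ at each position $(v_i,v_p)$, entry $+1$ at each position $(v_i,v_q)$ for $i=1,\dots,t$, and zero elsewhere.

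Next I would evaluate $A_\alpha(H)X$ row by row. Starting from $A_\alpha(G)X=\lambda_\alpha(G)X$, the only rows of $A_\alpha(H)X$ that differ from those of $A_\alpha(G)X$ are the rows indexed by $v_1,\dots,v_t$, and at each such row the difference equals $(1-\alpha)(x_{v_q}-x_{v_p})$. Hence
$$(A_\alpha(H)X)_{v_i}=\lambda_\alpha(G)x_{v_i}+(1-\alpha)(x_{v_q}-x_{v_p}),\qquad i=1,\dots,t,$$
while $(A_\alpha(H)X)_u=\lambda_\alpha(G)x_u$ for every other vertex $u$. Under the hypothesis $x_{v_q}\geq x_{v_p}$, this yields $A_\alpha(H)X\geq\lambda_\alpha(G)X$, and Lemma \ref{le:3} applied to the nonnegative matrix $A_\alpha(H)$ gives $\lambda_\alpha(H)\geq\lambda_\alpha(G)$.

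For the strict version, I would note that if $H$ is strongly connected then $A_\alpha(H)$ is irreducible, and if $x_{v_q}>x_{v_p}$ (with $t\geq 1$) then the inequality $A_\alpha(H)X\geq\lambda_\alpha(G)X$ is strict in the $v_1$-th coordinate (using Definition \ref{de:1}'s convention for ``$>$''), so the irreducible case of Lemma \ref{le:3} delivers $\lambda_\alpha(H)>\lambda_\alpha(G)$. There is no real obstacle: the only point requiring care is the bookkeeping that ensures $D(H)=D(G)$, which is what makes this ``arc-shifting'' argument work uniformly for all $\alpha\in[0,1)$ rather than only for $\alpha=0$.
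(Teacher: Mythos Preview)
Your proof is correct and follows essentially the same approach as the paper: both compute the row-by-row difference $(A_\alpha(H)X)_i-(A_\alpha(G)X)_i$, find it equals $(1-\alpha)(x_{v_q}-x_{v_p})\geq 0$ on the rewired rows and zero elsewhere, and then invoke Lemma~\ref{le:3}. Your explicit remark that the rewiring preserves all outdegrees (so $D(H)=D(G)$) is a nice clarification that the paper's proof leaves implicit.
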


\begin{proof} We will show $(A_\alpha(H)X)_i\geq(A_\alpha(G)X)_i$ for any $v_i\in V(G)$.

{\bf Case 1.} $v_i\notin N^-_{v_p}\setminus\{N^-_{v_q}\cup\{v_p\}\}$.

Then $(A_\alpha(H)X)_i=\alpha d_i^+(G)x_i +(1-\alpha)\sum\limits_{(v_i,v_j)\in E(G)}x_j=(A_\alpha(G)X)_i$.

{\bf Case 2.} $v_i\in N^-_{v_p}\setminus\{N^-_{v_q}\cup\{v_p\}\}$.

Then $(A_\alpha(H)X)_i-(A_\alpha(G)X)_i=(1-\alpha)(x_{v_q}-x_{v_p})\geq0$.

Thus $A_\alpha(H)X\geq A_\alpha(G)X=\lambda_\alpha(G)X$. By Lemma \ref{le:3}, $\lambda_\alpha(H)\geq
\lambda_\alpha(G)$.

Moreover, if $H$ is strongly connected and $x_{v_q}>x_{v_p}$, then by Lemma \ref{le:3}, we have $\lambda_\alpha(H)>
\lambda_\alpha(G)$.
\end{proof}

\noindent\begin{lemma}\label{le:5} Let $G$ ($\neq\overset{\longrightarrow}{C_{n}}$) be a strongly connected digraph
with $V(G)=\{v_1,v_2,\cdots,v_n\}$. Suppose that $\overset{\longrightarrow}{P_k}=v_1v_2\dots v_k$ $(k\geq3)$ be a directed path of
$G$ with $d_i^+=1$ for $(i=2,3,\ldots,k-1)$. Then we have $x_2<x_3<\ldots<x_{k-1}<x_k$.
\end{lemma}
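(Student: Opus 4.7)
The plan is to read off the eigenvalue equation $A_\alpha(G)X = \lambda_\alpha(G) X$ at each of the vertices $v_2,\ldots,v_{k-1}$, where the hypothesis $d_i^+=1$ forces the equation to reduce to a one-term linear recursion.

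First I would write the $i$-th coordinate of $A_\alpha(G)X = \lambda_\alpha(G) X$ as
\[
\lambda_\alpha(G)\, x_i \;=\; \alpha\, d_i^+\, x_i \;+\; (1-\alpha)\!\!\sum_{(v_i,v_j)\in E(G)} x_j .
\]
For each $i\in\{2,3,\ldots,k-1\}$, the assumption $d_i^+=1$ combined with the fact that $(v_i,v_{i+1})$ is an arc of $G$ (since $\overset{\longrightarrow}{P_k}$ is a directed path in $G$) means that $v_{i+1}$ is the unique out-neighbor of $v_i$. The eigenvalue equation therefore collapses to
\[
\lambda_\alpha(G)\, x_i \;=\; \alpha\, x_i \;+\; (1-\alpha)\, x_{i+1},
\]
which I rearrange as
\[
x_{i+1} \;=\; \frac{\lambda_\alpha(G)-\alpha}{1-\alpha}\; x_i .
\]

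The key quantitative input is then that the ratio $\tfrac{\lambda_\alpha(G)-\alpha}{1-\alpha}$ exceeds $1$. Since $\alpha<1$ throughout the paper, this is equivalent to $\lambda_\alpha(G)>1$. But $G\neq \overset{\longrightarrow}{C_n}$, so Corollary \ref{co:c2} gives exactly $\lambda_\alpha(G)>1$. Together with positivity of the Perron vector $X$, the recursion therefore yields $x_{i+1}>x_i>0$ for every $i=2,3,\ldots,k-1$, which chains together into the desired strict inequalities $x_2<x_3<\cdots<x_{k-1}<x_k$.

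I do not anticipate a serious obstacle: the only subtlety is making sure that the hypothesis $d_i^+=1$ is used correctly (it is what kills the sum over out-neighbors and reduces the eigen-equation to a scalar recursion), and that the step $\lambda_\alpha(G)>1$ is properly justified by invoking Corollary \ref{co:c2} with the hypothesis $G\neq\overset{\longrightarrow}{C_n}$. The hypothesis $\alpha<1$ is needed to divide by $1-\alpha$ and to conclude that the ratio strictly exceeds one; this matches the standing assumption of the paper.
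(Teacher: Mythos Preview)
Your argument is correct and essentially identical to the paper's: both reduce the eigen-equation at $v_i$ (for $2\le i\le k-1$) to $\lambda_\alpha(G)x_i=\alpha x_i+(1-\alpha)x_{i+1}$ via $d_i^+=1$, and then use $\lambda_\alpha(G)>1$ (the paper via Corollary~\ref{co:1} with a proper subcycle, you via Corollary~\ref{co:c2}) to conclude $x_i<x_{i+1}$.
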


\begin{proof} Since $G$ is a strongly connected digraph and $G\neq\overset{\longrightarrow}{C_{n}}$,
then $D$ contains a directed cycle, denoted by $\overset{\longrightarrow}{C_{g}}$ ($g\geq2$), as a proper subdigraph of $G$.
Thus $\lambda_\alpha(G)>\lambda_\alpha(\overset{\longrightarrow}{C_{g}})=1$ by Corollary \ref{co:1}. Therefore, for any $i\in\{2,3,\ldots,k-1\}$,
we have  $$x_i<\lambda_\alpha(G)x_i=\alpha x_i+(1-\alpha)x_{i+1}.$$ Then $x_i<x_{i+1}$ and thus $x_2<x_3<\ldots<x_{k-1}<x_k$.
\end{proof}
\noindent\begin{lemma}\label{le:6} (\cite{BP}) Let $B$ be nonnegative matrices, then
$B$ is reducible if and only if $\rho(B)$ is the spectral radius of some proper principal submatrix of $B$.
\end{lemma}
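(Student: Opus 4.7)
The plan is to prove the two directions separately. The forward direction is the easier one and relies on the standard normal form of a reducible matrix, while the reverse direction is obtained by the strict monotonicity of the Perron root for irreducible matrices, i.e.\ Lemma~\ref{le:2} of this paper.

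First I would prove the ``only if'' direction. Assume $B$ is reducible. By definition there exists a permutation matrix $P$ so that
$$
P B P^{T}=\begin{pmatrix} B_{1} & C \\ 0 & B_{2}\end{pmatrix},
$$
where $B_{1}$ and $B_{2}$ are square principal submatrices of order strictly less than that of $B$. Since the characteristic polynomial of a block triangular matrix factors as the product of the characteristic polynomials of the diagonal blocks, we get $\sigma(B)=\sigma(B_{1})\cup\sigma(B_{2})$, and therefore $\rho(B)=\max\{\rho(B_{1}),\rho(B_{2})\}$. Whichever of the two achieves the maximum is a proper principal submatrix of $B$ whose spectral radius equals $\rho(B)$.

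For the ``if'' direction, I would argue the contrapositive: if $B$ is irreducible, then $\rho(B')<\rho(B)$ for every proper principal submatrix $B'$. Let $B'$ be the principal submatrix indexed by a proper subset $S\subsetneq\{1,2,\ldots,n\}$. Define an $n\times n$ auxiliary matrix $\widetilde{B}$ by
$$
\widetilde{B}_{ij}=\begin{cases} B_{ij}, & i,j\in S,\\ 0, & \text{otherwise.}\end{cases}
$$
Then $0\le \widetilde{B}\le B$ coordinatewise, and $\rho(\widetilde{B})=\rho(B')$ because the nonzero block of $\widetilde{B}$ is similar (after a permutation) to $B'\oplus 0$. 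Because $B$ is irreducible and $S$ is a proper subset, at least one entry $B_{ij}$ with exactly one of $i,j$ in $S$ must be nonzero; otherwise the rows/columns indexed by $S$ would form an invariant coordinate subspace, contradicting irreducibility. Hence $\widetilde{B}<B$ strictly in the sense of Definition~\ref{de:1}, and Lemma~\ref{le:2} gives $\rho(B')=\rho(\widetilde{B})<\rho(B)$, as required.

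The only place that requires care is the claim in the second paragraph that irreducibility forces some ``crossing'' entry $B_{ij}$ ($i\in S$, $j\notin S$ or vice versa) to be nonzero. This is essentially the definition of irreducibility in graph-theoretic terms (strong connectedness of the associated digraph), so I would either invoke that equivalence directly or give the one-line argument that the existence of a nontrivial partition $\{S,S^{c}\}$ with no crossing nonzero entries yields a permutation bringing $B$ to block triangular form, contradicting irreducibility. Once this observation is in place, the rest of the proof is routine application of the lemmas already established in Section~2.
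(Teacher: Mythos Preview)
Your argument is correct. Note, however, that the paper does not actually supply a proof of this lemma: it is quoted from \cite{BP} and used as a black box. So there is no ``paper's own proof'' to compare against; what you have written is a clean, self-contained justification where the paper simply cites the reference.

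One small remark on the reverse direction: you do not really need to identify a specific crossing entry $B_{ij}$ with $i\in S$, $j\notin S$. It suffices to observe that an irreducible nonnegative matrix of order $n\ge 2$ has no zero row, so for any $i\in S^{c}$ the $i$-th row of $B$ contains a nonzero entry which is set to zero in $\widetilde{B}$; hence $\widetilde{B}<B$ automatically. This slightly streamlines the last paragraph, but your version is also fine.
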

\noindent\begin{lemma}\label{le:7} Let $G$ ($\neq\overset{\longrightarrow}{C_{n}}$) be a strongly connected digraph with $V(G)=\{v_1,v_2,\cdots,v_n\}$, $(v_i,v_j)\in E(G)$ and $w\notin V(G)$, $G^w=(V(G^w),E(G^w))$ with $V(G^w)=V(G)\cup\{w\}$,
$E(G^w)=E(G)-\{(v_i,v_j)\}+\{(v_i,w),(w,v_j)\}$. Then
$\lambda_\alpha(G)\geq\lambda_\alpha(G^w)$.
\end{lemma}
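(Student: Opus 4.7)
The plan is to compare the two eigen-equations via a natural transplant of the Perron vector of $G^w$ to $V(G)$, then apply Lemma \ref{le:3}. Let $Y=(y_1,y_2,\dots,y_n,y_w)^T$ be the Perron vector of $A_\alpha(G^w)$ corresponding to $\mu:=\lambda_\alpha(G^w)$, and define $Z=(y_1,y_2,\dots,y_n)^T$ on $V(G)$ by simply dropping the $w$-coordinate. I will compute $A_\alpha(G)Z$ entry by entry and compare with $\mu Z$.

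For every vertex $v_k\ne v_i$, the out-neighborhood of $v_k$ and its out-degree in $G$ coincide with those in $G^w$ (the subdivision touches only the out-list of $v_i$ and the in-list of $v_j$, but in-lists do not appear in the eigen-equation). Hence $(A_\alpha(G)Z)_k=(A_\alpha(G^w)Y)_k=\mu y_k=\mu z_k$. The only nontrivial entry is at $v_i$: in $G^w$ the out-neighbor $v_j$ is replaced by $w$, while the out-degree is unchanged, so
\[
(A_\alpha(G)Z)_{v_i}-(A_\alpha(G^w)Y)_{v_i}=(1-\alpha)(y_{v_j}-y_w).
\]
From the eigen-equation at the subdivision vertex $w$ (which has out-degree $1$ with unique out-neighbor $v_j$), $\mu y_w=\alpha y_w+(1-\alpha)y_{v_j}$, so $y_{v_j}-y_w=\dfrac{\mu-1}{1-\alpha}\,y_w$.

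Next I would verify $\mu>1$. Since $G$ is strongly connected, so is $G^w$, and $G^w\ne\overset{\longrightarrow}{C_{n+1}}$ (otherwise $G\cong\overset{\longrightarrow}{C_n}$, contradicting the hypothesis). Corollary \ref{co:c2} applied to $G^w$ then yields $\mu>1$, so the difference above is strictly positive. Combining these computations gives
\[
A_\alpha(G)Z\ge \mu Z,
\]
with strict inequality in the $v_i$-coordinate. Since $G$ is strongly connected, $A_\alpha(G)$ is nonnegative and irreducible, and $Z$ is a positive vector, so Lemma \ref{le:3} delivers $\lambda_\alpha(G)\ge\mu=\lambda_\alpha(G^w)$, completing the proof.

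The only delicate step is confirming that $y_{v_j}\ge y_w$, which is where the assumption $G\ne\overset{\longrightarrow}{C_n}$ is essential; everything else is bookkeeping about which neighborhoods actually change under the subdivision. It is worth noting that the argument in fact yields the strict inequality $\lambda_\alpha(G)>\lambda_\alpha(G^w)$, but the lemma as stated only asks for $\ge$.
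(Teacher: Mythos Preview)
Your proof is correct and follows essentially the same approach as the paper: both take the Perron vector of $G^w$, extract $y_{v_j}>y_w$ from the eigen-equation at $w$ together with $\lambda_\alpha(G^w)>1$, and then feed this into Lemma~\ref{le:3}. The only difference is packaging---the paper routes the last step through the auxiliary digraph $H=G^w-\{(v_i,w)\}+\{(v_i,v_j)\}$, applying Lemma~\ref{le:4} to get $\lambda_\alpha(H)\ge\lambda_\alpha(G^w)$ and then Lemma~\ref{le:6} to identify $\lambda_\alpha(H)=\lambda_\alpha(G)$, whereas you drop the $w$-coordinate and apply Lemma~\ref{le:3} to $A_\alpha(G)$ directly, which is slightly cleaner.
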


\begin{proof} Since $G$ is strongly connected digraph, then $G^w$ is also strongly connected digraph.
Suppose ${\bf X}=(x_1,\ldots,x_n,x_w)^T$ is the Perron vector corresponding
to $\lambda_\alpha(G^w)$, where $x_w$ correspond to $w$, and $x_i$ correspond to $v_i$ for $i=1,2,\cdots,n$. Since
$G\neq\overset{\longrightarrow}{C_{n}}$, then $G^w\neq\overset{\longrightarrow}{C_{n}}_{+1}$. By Lemma \ref{le:1}, we
have $\lambda_\alpha(G^w)>1$. Clearly, $d_w^+(G^w)=1$. Thus $x_w<\lambda_\alpha(\theta(G^w)x_w=\alpha x_w+(1-\alpha)x_j$. Then $x_j>x_w$. Let $H=G^w-\{(v_i,w)\}+\{(v_i,v_j)\}$, then by
$d_w^-(G^w)=1$, $H$ is not strongly connected which has exactly two strongly connected components, one is isolated vertex $w$, the other is $G$. Thus
$A_\alpha(H)$ is nonnegative reducible, then by Lemma \ref{le:6}, we have $\lambda_\alpha(H)=\lambda_\alpha(G)$. On the other hand, by Lemma \ref{le:4}, $\lambda_\alpha(H)\geq\lambda_\alpha(G^w)>1$.
Thus $\lambda_\alpha(G)=\lambda_\alpha(H)\geq\lambda_\alpha(G^w)$.
\end{proof}

\section{The minimum $A_\alpha$ spectral radius of strongly connected digraphs with given girth}

Let $g\geq2$ and $\mathcal{G}_{n,g}$ denote the set of strongly connected digraph on $n$ vertices with girth $g$. If $g=n$, then
$\mathcal{G}_{n,g}=\{\overset{\longrightarrow}{C_{n}}\}$ and $\lambda_\alpha(\overset{\longrightarrow}{C_{n}})=1$. Thus we only need to consider
the cases $2\leq g\leq n-1$.

Let $2\leq g\leq n-1$ and $C_{n,g}$ be a digraph obtained by adding a directed path $\overset{\longrightarrow}{P_{n}}_{-g+2}=v_{g}v_{g+1}\dots v_nv_1$
on the directed cycle $\overset{\longrightarrow}{C_{g}}=v_{1}v_{2}\dots v_gv_1$ such that
$V(\overset{\longrightarrow}{C_{g}})\cap V(\overset{\longrightarrow}{P_{n}}_{-g+2})=\{v_g,v_1\}$ (as shown in Figure \ref{Fig.2.}), where $V(C_{n,g})=\{v_1,v_2,\ldots,v_n\}$.
Clearly, $C_{n,g}\in\mathcal{G}_{n,g}$.
\begin{figure}[H]
\begin{centering}
\includegraphics[scale=0.8]{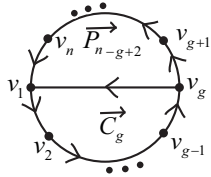}
\caption{The digraph $C_{n,g}$}\label{Fig.2.}
\end{centering}
\end{figure}

In \cite{LSWY}, Lin and Shu et al. proved that $C_{n,g}$ attains the minimum $A_0$ spectral radius among all strongly connected
digraphs with given girth. In \cite{HoYo}, Hong and You proved that $C_{n,g}$ also attains the minimum $A_{\frac{1}{2}}$ spectral radius
 among all strongly connected digraphs with given girth. We generalize their results to $0\leq\alpha<1$. In the rest of this section, we will show that $C_{n,g}$ achieves the
minimum $A_\alpha$ spectral radius among all digraphs in $\mathcal{G}_{n,g}$.

\noindent\begin{lemma}\label{le:8} Let $2\leq g\leq n-1$ and $C_{n,g}'=C_{n,g}-\{(v_n,v_1)\}+\{(v_n,v_g)\}$. Then
$\lambda_\alpha(C_{n,g}')>\lambda_\alpha(C_{n,g})$.
\end{lemma}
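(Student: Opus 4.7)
The plan is to apply Lemma \ref{le:4} with $v_p = v_1$ and $v_q = v_g$, viewing the passage from $C_{n,g}$ to $C_{n,g}'$ as rewiring the single in-neighbor $v_n$ of $v_1$ so that it becomes an in-neighbor of $v_g$. To invoke the strict form of that lemma I need two inputs: first, that $C_{n,g}'$ is strongly connected, and second, that the Perron vector $X=(x_1,\ldots,x_n)^T$ of $A_\alpha(C_{n,g})$ satisfies $x_{v_g} > x_{v_1}$. Strong connectivity of $C_{n,g}'$ is immediate: the $g$-cycle $v_1 v_2 \cdots v_g v_1$ is preserved, while the new arc $(v_n,v_g)$ together with the tail $v_g v_{g+1} \cdots v_n$ of the original path produces a cycle through $v_g$, so every vertex still reaches every other vertex.

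The main step is the comparison $x_{v_g} > x_{v_1}$. I would obtain it by applying Lemma \ref{le:5} to the directed path $v_n v_1 v_2 \cdots v_g$ sitting inside $C_{n,g}$. This path has $k = g+1 \geq 3$ vertices because $g \geq 2$, and its interior vertices $v_1, v_2, \ldots, v_{g-1}$ all lie on the $g$-cycle only, so each has outdegree exactly one in $C_{n,g}$. Since $g \leq n-1$ forces $C_{n,g} \neq \overset{\longrightarrow}{C_{n}}$, Lemma \ref{le:5} applies and yields
$$x_{v_1} < x_{v_2} < \cdots < x_{v_g},$$
whence in particular $x_{v_g} > x_{v_1}$.

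To finish the argument, observe that in $C_{n,g}$ the in-neighborhood of $v_g$ is $\{v_{g-1}\}$ and $v_n \neq v_g$ (since $g \leq n-1$), so $v_n \in N^-_{v_1} \setminus (N^-_{v_g} \cup \{v_g\})$. Lemma \ref{le:4}, applied with the single rewired in-neighbor $v_n$, combined with the strong connectivity of $C_{n,g}'$ and the strict inequality $x_{v_g} > x_{v_1}$, then delivers $\lambda_\alpha(C_{n,g}') > \lambda_\alpha(C_{n,g})$, as required. The only real obstacle is identifying the correct directed path on which to apply Lemma \ref{le:5}: prepending the arc $(v_n,v_1)$ to the cycle-path $v_1 v_2 \cdots v_g$ is exactly what makes $v_1$ an interior vertex of the path, so that its Perron coordinate can be compared with those of the later $v_i$'s, and in particular with $x_{v_g}$.
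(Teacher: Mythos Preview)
Your proof is correct and follows essentially the same approach as the paper: apply Lemma \ref{le:5} to a directed path in $C_{n,g}$ having $v_1$ as an interior vertex and $v_g$ as the terminal vertex to obtain $x_{v_g}>x_{v_1}$, then invoke the strict case of Lemma \ref{le:4}. The only difference is that the paper applies Lemma \ref{le:5} to the longer path $v_{g+1}v_{g+2}\cdots v_n v_1\cdots v_g$ (together with the short path $v_g v_{g+1} v_{g+2}$) and thereby obtains the full ordering $x_{g+1}<x_{g+2}<\cdots<x_n<x_1<\cdots<x_g$, while you use the tail subpath $v_n v_1\cdots v_g$, which already suffices for this lemma and still yields the inequalities $x_1<x_t$ for $2\le t\le g$ used later in the proof of Theorem \ref{th:c-1}.
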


\begin{proof} Suppose ${\bf X}=(x_1,x_2\ldots,x_n)^T$ is the Perron vector corresponding
to $\lambda_\alpha(C_{n,g})$, where $x_i$ correspond to $v_i$ for $i=1,2,\cdots,n$. Since $\overset{\longrightarrow}{D}=v_{g+1}v_{g+2}\ldots v_nv_1\ldots v_g$
and $\overset{\longrightarrow}{R}=v_gv_{g+1}v_{g+2}$ are the directed paths of $C_{n,g}$ with $d_{v_i}^+=1$ for $i\in\{1,2,,\ldots,g-1,g+1,g+2,\ldots,n\}$,
then by Lemma \ref{le:5}, we have $x_{g+2}<x_{g+3}<\ldots<x_n<x_1<x_2<\ldots<x_{g-1}<x_g$ and $x_{g+1}<x_{g+2}$. Thus
$x_{g+1}<x_{g+2}<x_{g+3}<\ldots<x_n<x_1<x_2<\ldots<x_{g-1}<x_g$.

Note that $C_{n,g}'$ is strongly connected and $x_g>x_1$, then by Lemma \ref{le:4}, we have $\lambda_\alpha(C_{n,g}')$ $>\lambda_\alpha(C_{n,g})$.
\end{proof}

\noindent\begin{theorem}\label{th:c-1}  Let $2\leq g\leq n-1$ and $G\in\mathcal{G}_{n,g}$. Then
$\lambda_\alpha(G)\geq\lambda_\alpha(C_{n,g})>1$ with equality if and only if $G\cong C_{n,g}$.
\end{theorem}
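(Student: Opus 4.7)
The plan is to split the claim into the strict lower bound $\lambda_\alpha(C_{n,g}) > 1$ and the uniqueness of the minimizer. For the strict lower bound, I would note that the directed Hamilton cycle $v_1 v_2 \cdots v_n v_1$ is a proper spanning subdigraph of $C_{n,g}$, obtained by omitting only the arc $(v_g,v_1)$; since $C_{n,g}$ is strongly connected, Corollary \ref{co:1}(ii) immediately gives $\lambda_\alpha(C_{n,g}) > \lambda_\alpha(\overset{\longrightarrow}{C_n}) = 1$.

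For the uniqueness of the extremal digraph, let $G \in \mathcal{G}_{n,g}$ attain the minimum $A_\alpha$ spectral radius. By Corollary \ref{co:1}(ii), $G$ must be arc-minimal in $\mathcal{G}_{n,g}$: deleting any arc of $G$ either destroys strong connectivity or raises the girth above $g$. Fix a shortest cycle $C = v_1 v_2 \ldots v_g v_1$ of $G$ and let $X$ be the Perron vector of $A_\alpha(G)$. Since the $n-g$ vertices outside $C$ must be joined to $C$ by directed ears (by strong connectivity), arc-minimality combined with the girth constraint forces every internal ear vertex to have in-degree and out-degree exactly one. Lemma \ref{le:5} then yields strict monotonicity of the Perron entries along each such ear and along the segments of $C$ between attachment points.

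With these monotonicity relations in hand, I would rule out every configuration other than $C_{n,g}$ by applying the in-arc switching operation of Lemma \ref{le:4}. If $G \not\cong C_{n,g}$, I expect to locate a pair of vertices $v_p, v_q$ with $x_{v_q} > x_{v_p}$ so that redirecting certain in-arcs of $v_q$ to $v_p$ produces $H \in \mathcal{G}_{n,g}$ with $\lambda_\alpha(H) < \lambda_\alpha(G)$, contradicting minimality; Lemma \ref{le:8} is precisely the template computation for this reduction in the special case where $G$ differs from $C_{n,g}$ by a single relocated arc. The main obstacle is verifying at each switching step that $H$ remains in $\mathcal{G}_{n,g}$ — that strong connectivity is preserved and no cycle of length strictly less than $g$ is created. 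This forces a careful case analysis of how each ear attaches to $C$, and in each non-$C_{n,g}$ configuration a suitable switch must be exhibited. Once no strict-decrease switch is possible, the structural constraints on the Perron vector combined with Lemma \ref{le:5} force $G \cong C_{n,g}$, and the equality case follows from the strict form of Lemma \ref{le:4}.
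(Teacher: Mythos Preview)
Your approach differs substantially from the paper's, and there is a genuine gap in the switching step.

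\textbf{The gap.} Lemma \ref{le:4} is asymmetric: it uses the Perron vector $X$ of the \emph{source} digraph $G$ and concludes that moving in-arcs from $v_p$ to $v_q$ with $x_{v_q}\geq x_{v_p}$ yields $\lambda_\alpha(H)\geq\lambda_\alpha(G)$. It does \emph{not} say that moving in-arcs toward a vertex with smaller Perron entry strictly decreases the spectral radius; that reverse statement would require control of the Perron vector of the \emph{target} $H$, which you do not have. So the sentence ``redirecting certain in-arcs of $v_q$ to $v_p$ produces $H\in\mathcal{G}_{n,g}$ with $\lambda_\alpha(H)<\lambda_\alpha(G)$'' is not justified by Lemma \ref{le:4}. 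In particular, Lemma \ref{le:8} is not a template for decreasing from an arbitrary $G$: it starts from $C_{n,g}$, uses the Perron vector of $C_{n,g}$, and shows that switching \emph{away} from $C_{n,g}$ increases the spectral radius. Your plan runs the comparison in the wrong direction.

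\textbf{How the paper avoids this.} The paper never tries to descend from $G$ by local switches. Instead it (i) passes to a subdigraph $G_1$ consisting of the $g$-cycle together with a single directed ear from $v_g$ back to some $v_t$ on the cycle, using Corollary \ref{co:1} (and implicitly Lemma \ref{le:2} for vertex deletions) to get $\lambda_\alpha(G)\geq\lambda_\alpha(G_1)$; (ii) applies Lemma \ref{le:7} repeatedly to insert vertices into the ear until the digraph has $n$ vertices, obtaining one of finitely many explicit digraphs $H'$ with $\lambda_\alpha(G_1)\geq\lambda_\alpha(H')$; and (iii) compares each such $H'$ to $C_{n,g}$ by a single application of Lemma \ref{le:4} using the Perron vector of $C_{n,g}$, exactly as in Lemma \ref{le:8}. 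Step (ii), the vertex-insertion lemma, is the key device that lets the argument land on a small explicit family where the Perron vector of $C_{n,g}$ suffices; your plan omits this tool entirely, which is why you are forced into an unbounded ear-by-ear case analysis with the switching lemma pointed the wrong way.
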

\begin{proof} Since $\overset{\longrightarrow}{C_{g}}$ is a proper subdigraph of $G$,
$\lambda_\alpha(G)>\lambda_\alpha(\overset{\longrightarrow}{C_{g}})=1$ by Corollary \ref{co:1}. Without loss of generality, we let
$\overset{\longrightarrow}{C_{g}}=v_{1}v_{2}\dots v_gv_1$, where $2\leq g\leq n-1$. Since $G\in\mathcal{G}_{n,g}$ is strongly connected,
it is possible to obtain a digraph $G_1$ from $G$ by deleting vertices and arcs in such a way that $G_1\cong H$,
where $H=(V(H),E(H))$, $V(H)=\{v_1,v_2,\ldots,v_g,v_{g+1},\ldots,v_{g+l-2}\}$, $E(H)=\{(v_i,v_{i+1})| i\in\{1,2,\ldots,g+l-3\}\}\cup\{(v_g,v_1),(v_{g+l-2},v_t)\}$
with $1\leq t\leq g$ (see Figure \ref{Fig.3.}).
\begin{figure}[H]
\begin{centering}
\includegraphics[scale=0.8]{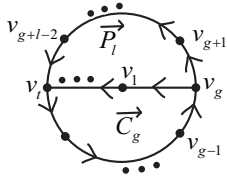}
\caption{The digraph $H$}\label{Fig.3.}
\end{centering}
\end{figure}

By Corollary \ref{co:1}, we have $\lambda_\alpha(G)\geq\lambda_\alpha(G_1)$ with equality if and only if $G\cong G_1$.
Since $G_1$ contains a directed path $\overset{\longrightarrow}{P_{l}}=v_{g}v_{g+1}\dots v_{g+l-2}v_t$. Insert $n-g-l+2$
vertices to $\overset{\longrightarrow}{P_{l}}$ such that the resulting digraph is denoted by $H'$. Clearly,
$H'$ is strongly connected. By using Lemma \ref{le:7} by $n-g-l+2$ times, we have $\lambda_\alpha(H)\geq\lambda_\alpha(H')$ with equality
if and only if $H\cong H'$. Then we consider the following three cases.

{\bf Case 1:} $t=1$.

In this case, $H'\cong C_{n,g}$, then $\lambda_\alpha(G)\geq\lambda_\alpha(G_1)=\lambda_\alpha(H)\geq\lambda_\alpha(H')=\lambda_\alpha(C_{n,g})$,
with equality if and only if $G\cong C_{n,g}$.

{\bf Case 2:} $t=g$.

In this case,$H'\cong C_{n,g}'$, then $\lambda_\alpha(G)\geq\lambda_\alpha(G_1)=\lambda_\alpha(H)
\geq\lambda_\alpha(H')=\lambda_\alpha(C_{n,g}')>\lambda_\alpha(C_{n,g})$ by Lemma \ref{le:8}.

{\bf Case 3:} $2\leq t\leq g-1$.

${\bf X}=(x_1,x_2\ldots,x_n)^T$ is the Perron vector corresponding
to $\lambda_\alpha(C_{n,g})$, where $x_i$ correspond to $v_i$ for $i=1,2,\cdots,n$. Noting that
$H'\cong C_{n,g}-\{(v_n,v_1)\}+\{(v_n,v_t)\}$, by the proof of Lemma \ref{le:8}, we have $x_1<x_t$,
then $\lambda_\alpha(H')>\lambda_\alpha(C_{n,g})$ by Lemma \ref{le:4}. Thus $\lambda_\alpha(G)\geq\lambda_\alpha(G_1)=\lambda_\alpha(H)
\geq\lambda_\alpha(H')>\lambda_\alpha(C_{n,g})$.

In all cases, $\lambda_\alpha(G)\geq\lambda_\alpha(C_{n,g})>1$ with equality if and only if $G\cong C_{n,g}$.
\end{proof}

\section{The minimum $A_\alpha$ spectral radius of strongly connected digraphs with given clique number}

Let $\mathcal{C}_{n,d}$ denote the set of strongly connected digraphs on $n$ vertices with clique number $d$. If $d=n$, then
$\mathcal{C}_{n,d}=\{\overset{\longleftrightarrow}{K_{n}}\}$ and $\lambda_\alpha(\overset{\longleftrightarrow}{K_{n}})=n-1$. If $d=1$, then
$\overset{\longrightarrow}{C_{n}}\in\mathcal{C}_{n,d}$ and $\lambda_\alpha(\overset{\longrightarrow}{C_{n}})=1$. By Corollary \ref{co:c2}, for
any $G\in\mathcal{C}_{n,d}$, $\lambda_\alpha(G)\geq1=\lambda_\alpha(\overset{\longrightarrow}{C_{n}})$
with equality if and only if $G\cong\overset{\longrightarrow}{C_{n}}$. Thus we only need to consider
the cases $2\leq d\leq n-1$.

Let $2\leq d\leq n-1$, and $B_{n,d}$ be a digraph obtained by adding a directed path $\overset{\longrightarrow}{P_{n}}_{-d+2}=v_1v_2\ldots v_{n-d+2}$
to a clique $\overset{\longleftrightarrow}{K_{d}}$ such that
$V(\overset{\longleftrightarrow}{K_{d}})\cap V(\overset{\longrightarrow}{P_{n}}_{-d+2})=\{v_{n-d+2},v_1\}$ (as shown in Figure \ref{Fig.4.}), where $V(B_{n,d})=\{v_1,v_2,\ldots,v_n\}$.
Clearly, $B_{n,d}\in\mathcal{C}_{n,d}$.
\begin{figure}[H]
\begin{centering}
\includegraphics[scale=0.8]{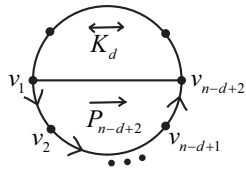}
\caption{The digraph $B_{n,d}$}\label{Fig.4.}
\end{centering}
\end{figure}

In \cite{LSWY}, Lin and Shu et al. proved that $B_{n,d}$ attains the minimum $A_0$ spectral radius among all strongly connected
digraphs with given clique number. In \cite{HoYo}, Hong and You determined that $B_{n,d}$ also attains the minimum $A_{\frac{1}{2}}$ spectral radius
 among all strongly connected digraphs with given clique number. We generalize their results to $0\leq\alpha<1$. In the rest of this section,
 we will show that $B_{n,d}$ achieves the
minimum $A_\alpha$ spectral radius among all digraphs in $\mathcal{C}_{n,d}$.

\noindent\begin{lemma}\label{le:9} Let $2\leq d\leq n-1$ and $B_{n,d}'=B_{n,d}-\{(v_{n-d+1},v_{n-d+2})\}+\{(v_{n-d+1},v_1)\}$. Then
$\lambda_\alpha(B_{n,d}')>\lambda_\alpha(B_{n,d})$.
\end{lemma}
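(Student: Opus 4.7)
The plan is to mimic the structure of the proof of Lemma~\ref{le:8}: first establish the coordinate inequality $x_1>x_{n-d+2}$ for the Perron vector of $B_{n,d}$, and then invoke Lemma~\ref{le:4} to transfer the arc $(v_{n-d+1},v_{n-d+2})$ onto $v_1$. Unlike in Lemma~\ref{le:8}, however, both endpoints of interest sit inside the clique and the chain-of-inequalities trick from Lemma~\ref{le:5} cannot reach $v_1$ directly, so a different argument is needed to compare $x_1$ and $x_{n-d+2}$.

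First I would let ${\bf X}=(x_1,\ldots,x_n)^T$ be the Perron vector corresponding to $\lambda:=\lambda_\alpha(B_{n,d})$, noting every $x_i>0$. I would then write the eigenvector equations at the two distinguished clique vertices $v_1$ and $v_{n-d+2}$: the out-neighbors of $v_1$ are the path successor $v_2$ together with the $d-1$ other clique vertices $v_{n-d+2},v_{n-d+3},\ldots,v_n$, so $d^+_{v_1}=d$, while the out-neighbors of $v_{n-d+2}$ are only the $d-1$ other clique vertices $v_1,v_{n-d+3},\ldots,v_n$, so $d^+_{v_{n-d+2}}=d-1$. The common terms $(1-\alpha)\sum_{i=n-d+3}^{n}x_i$ cancel on subtraction, and collecting terms yields an identity of the shape
\[
\bigl(\lambda-\alpha d+(1-\alpha)\bigr)(x_1-x_{n-d+2})=x_{n-d+2}+(1-\alpha)x_2.
\]
Because $\Delta^+(B_{n,d})=d$ (attained at $v_1$), Corollary~\ref{co:c3} gives $\lambda>\alpha d$, so the bracket on the left is strictly positive. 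The right-hand side is manifestly positive, and hence $x_1>x_{n-d+2}$.

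Having the coordinate inequality in hand, I would apply Lemma~\ref{le:4} with $v_p=v_{n-d+2}$ and $v_q=v_1$. A short check shows that the set $N^-_{v_{n-d+2}}\setminus(N^-_{v_1}\cup\{v_1\})$ equals $\{v_{n-d+1}\}$, since all clique in-neighbors of $v_{n-d+2}$ are also in-neighbors of $v_1$. Redirecting this single in-arc from $v_{n-d+2}$ onto $v_1$ produces exactly $B_{n,d}'$. Strong connectivity of $B_{n,d}'$ is immediate: the clique is untouched, and the cycle $v_1\to v_2\to\cdots\to v_{n-d+1}\to v_1$ together with the clique arc $v_1\to v_{n-d+2}$ lets every vertex reach every other. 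Lemma~\ref{le:4} then yields $\lambda_\alpha(B_{n,d}')>\lambda_\alpha(B_{n,d})$.

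The main technical point is the derivation of the displayed identity; once one recognizes that the clique out-neighborhoods of $v_1$ and $v_{n-d+2}$ differ only by the swap $v_1\leftrightarrow v_{n-d+2}$ of each other, plus the extra path out-neighbor $v_2$ for $v_1$, the cancellation is essentially forced. The remainder is just a positivity check powered by the strict lower bound $\lambda>\alpha\Delta^+$ from Corollary~\ref{co:c3}.
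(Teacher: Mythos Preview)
Your argument is essentially the paper's own proof: write the eigenvector equations at $v_1$ and $v_{n-d+2}$, subtract, and use $\lambda>\alpha d$ from Corollary~\ref{co:c3} to force $x_1>x_{n-d+2}$, then apply Lemma~\ref{le:4}. One small slip: the subtraction actually gives $\bigl(\lambda-\alpha d+(1-\alpha)\bigr)(x_1-x_{n-d+2})=\alpha\,x_{n-d+2}+(1-\alpha)x_2$, not $x_{n-d+2}+(1-\alpha)x_2$; this does not affect the positivity conclusion.
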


\begin{proof} Suppose ${\bf X}=(x_1,x_2\ldots,x_n)^T$ is the Perron vector corresponding
to $\lambda_\alpha(B_{n,d})$, where $x_i$ correspond to $v_i$ for $i=1,2,\cdots,n$.
By Lemma \ref{le:4}, we only need to show that $x_{n-d+2}<x_1$.

Since
$\overset{\longleftrightarrow}{K_{d}}$ is a proper sundigraph of $\lambda_\alpha(B_{n,d})$, then $\lambda_\alpha(B_{n,d})>d-1$ by Corollary \ref{co:1}.
Therefore, from $A_\alpha(B_{n,d}){\bf X}=\lambda_\alpha(B_{n,d}){\bf X}$, we have
$$\lambda_\alpha(B_{n,d})x_1=\alpha dx_1+(1-a)x_2+(1-\alpha)x_{n-d+2}+(1-\alpha)\sum\limits_{v_i\in V_1}x_i,$$
$$\lambda_\alpha(B_{n,d})x_{n-d+2}=\alpha (d-1)x_{n-d+2}+(1-a)x_1+(1-\alpha)\sum\limits_{v_i\in V_1}x_i,$$
where $V_1=V(\overset{\longleftrightarrow}{K_{d}})\setminus\{v_{n-d+2},v_1\}$.

Then $(\lambda_\alpha(B_{n,d})-\alpha d+1-\alpha)(x_1-x_{n-d+2})=\alpha x_{n-d+2}+(1-\alpha)x_2>0$. By Corollary \ref{co:c3},
$\lambda_\alpha(B_{n,d})>\alpha d$. Thus $x_1>x_{n-d+2}$.
\end{proof}

\noindent\begin{theorem}\label{th:c-2}  Let $2\leq d\leq n-1$ and $G\in\mathcal{C}_{n,d}$. Then
$\lambda_\alpha(G)\geq\lambda_\alpha(B_{n,d})$ with equality if and only if $G\cong B_{n,d}$.
\end{theorem}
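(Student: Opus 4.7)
The plan is to mirror the strategy of Theorem \ref{th:c-1} almost verbatim, replacing the role of the shortest cycle by the clique $\overset{\longleftrightarrow}{K_d}$. Given $G\in\mathcal{C}_{n,d}$, I would first label the vertices of an embedded clique as $v_1,\ldots,v_d$. Using that $G$ is strongly connected, I would delete vertices and arcs to produce a subdigraph $G_1$ isomorphic to a canonical form $H$ consisting of the clique $\overset{\longleftrightarrow}{K_d}$ on $\{v_1,\ldots,v_d\}$ together with one hanging directed path $\overset{\longrightarrow}{P_l}=v_d v_{d+1}\ldots v_{d+l-2}v_t$ for some $1\leq t\leq d$ and $l\geq 2$. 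By the monotonicity of $\lambda_\alpha$ under vertex/arc deletion (Corollary \ref{co:1} combined with the standard principal-submatrix bound for nonnegative matrices applied to $A_\alpha(G)$), we get $\lambda_\alpha(G)\geq\lambda_\alpha(G_1)=\lambda_\alpha(H)$ with equality iff $G\cong G_1$.

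Next, to restore the vertex count to $n$, I would insert $n-d-l+2$ new vertices along the path $\overset{\longrightarrow}{P_l}$, call the result $H'$, and apply Lemma \ref{le:7} iteratively to conclude $\lambda_\alpha(H)\geq\lambda_\alpha(H')$.

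The case analysis is simpler than in Theorem \ref{th:c-1} because $\overset{\longleftrightarrow}{K_d}$ is vertex-transitive: only $v_d$ (the start of the attached path) is distinguished, so up to relabeling the remaining clique vertices, there are only two cases. If $t=d$, the path closes on itself and $H'$ is the clique together with a directed cycle of length $n-d+1$ sharing one vertex with it; a relabeling exchanging $v_d$ with $v_1$ gives $H'\cong B_{n,d}'$, and Lemma \ref{le:9} yields $\lambda_\alpha(H')=\lambda_\alpha(B_{n,d}')>\lambda_\alpha(B_{n,d})$. If $t\in\{1,\ldots,d-1\}$, then by symmetry among the non-$v_d$ clique vertices we may take $t=1$, and relabeling yields $H'\cong B_{n,d}$. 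Chaining inequalities $\lambda_\alpha(G)\geq\lambda_\alpha(G_1)=\lambda_\alpha(H)\geq\lambda_\alpha(H')\geq\lambda_\alpha(B_{n,d})$ gives the bound, and tracing the equality conditions (including the hypothesis in Lemma \ref{le:7} forcing equality only if no vertex was actually inserted) forces $G\cong B_{n,d}$.

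The step I expect to require the most care is the structural reduction producing $G_1\cong H$. The idea is to pick a single non-clique vertex and extract a shortest out-and-back walk between the clique and that vertex, then delete all other vertices and all extraneous arcs; but since this deletes vertices, one cannot directly invoke Corollary \ref{co:1} and must appeal to the fact that for a nonnegative irreducible matrix any proper principal submatrix has strictly smaller spectral radius, combined with $A_\alpha(G[S])\leq A_\alpha(G)|_{S\times S}$ entrywise. The vertex-insertion via Lemma \ref{le:7} then restores us to $n$ vertices so that a direct comparison with $B_{n,d}$ and $B_{n,d}'$ becomes legitimate.
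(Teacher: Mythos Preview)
Your proposal is correct and follows essentially the same route as the paper: reduce $G$ to a clique with a single attached directed path, then use Lemma \ref{le:7} to subdivide the path up to $n$ vertices and compare with $B_{n,d}$ or $B_{n,d}'$ via Lemma \ref{le:9}. The paper phrases the reduction as ``$G_1\cong B_{d+l-2,d}$ or $G_1\cong B'_{d+l-1,d}$'' directly, whereas you first allow the path to terminate at an arbitrary clique vertex $v_t$ and then invoke the vertex-transitivity of $\overset{\longleftrightarrow}{K_d}$ to collapse to those two cases; this is only a presentational difference. Your remark that vertex deletion is not literally covered by Corollary \ref{co:1} (which is stated for spanning subdigraphs) and must instead go through the principal-submatrix bound for irreducible nonnegative matrices is a point the paper glosses over as well.
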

\begin{proof} Since $G\in\mathcal{C}_{n,d}$, $\overset{\longleftrightarrow}{K_{d}}$ is a proper subdigraph of $G$.
Since $G$ is strongly connected, it is possible to obtain a digraph $G_1$ from $G$ by deleting vertices and arcs in such a way that either
$G_1\cong B_{d+l-2,d}$ ($l\geq3$) or $G_1\cong B_{d+l-1,d}'$ ($l\geq2$). By Corollary \ref{co:1}, we have $\lambda_\alpha(G)\geq\lambda_\alpha(G_1)$
with equality if and only if $G\cong G_1$. Then we consider the following two cases.

{\bf Case 1:} $G_1\cong B_{d+l-2,d}$ ($l\geq3$).

Insert $n-d-l+2$
vertices to $\overset{\longrightarrow}{P_{l}}$ such that the resulting digraph is $B_{n,d}$.
By using Lemma \ref{le:7} by $n-d-l+2$ times, we have $\lambda_\alpha(G_1)\geq\lambda_\alpha(B_{n,d})$ with equality
if and only if $G_1\cong B_{n,d}$.

{\bf Case 2:}  $G_1\cong B_{d+l-2,d}'$ ($l\geq2$).

Insert $n-d-l+2$
vertices to $\overset{\longrightarrow}{P_{l}}$ such that the resulting digraph is $B_{n,d}'$.
By using Lemma \ref{le:7} by $n-d-l+2$ times, we have $\lambda_\alpha(G_1)\geq\lambda_\alpha(B_{n,d})'$ with equality
if and only if $G_1\cong B_{n,d}'$. But by Lemma \ref{le:9}, $\lambda_\alpha(B_{n,d}')>\lambda_\alpha(B_{n,d})$.
Thus $\lambda_\alpha(G)\geq\lambda_\alpha(G_1)\geq\lambda_\alpha(B_{n,d}')>\lambda_\alpha(B_{n,d})$.

Combining the above arguments, $\lambda_\alpha(G)\geq\lambda_\alpha(B_{n,d})$ with equality if and only if $G\cong B_{n,d}$.
\end{proof}

A tournament on $n$ vertices that maximizes the spectral radius of its $A_\alpha$ matrix among all such tournaments is called extremal.
For $\alpha=0$, it has long been known that if $n$ is odd, the extremal tournaments are precisely the
ones that are regular, i.e. have indegree and outdegree $(n-1)/2$ at each vertex. For $n$ even, the extremal
tournaments are those which are isomorphic to the Brualdi-Li tournament. This was conjectured by
Brualdi and Li \cite{Br} and proved by Drury \cite{Dr}. For $n$ and $d$ fixed, let $l=\lfloor\frac{n}{d}\rfloor$ and $r = n -ld$.
Let $V_j$, $j = 1,\ldots,d$ be disjoint vertex sets, where
$|V_j| = l + 1$ for $j = 1,\ldots,r$ and $|V_j| = l$ for $j = r + 1,\ldots,d$. Let $G_0$ be a digraph with vertex
set $\bigcup\limits_{j=1}^d V_j$ with all possible arcs between $V_j$ and $V_k$ for $j\neq k$ and the induced subdigraph $G_0[V_j]$ an
extremal tournament for $j = 1,\ldots,d$.

It is natural to ask: which digraph achieves the maximum $A_\alpha$ spectral radius among all strongly connected digraphs with given clique number?
If $\alpha=0$, Drury and Lin \cite{DL1} proved that $G_0$ attains maximum $A_0$ spectral radius
among all strongly connected digraphs with given clique number $d$. Therefore, we propose the following problem.

\noindent\begin{problem}\label{pr:c-1} Let $1\leq d\leq n-1$.
Among all digraphs in $\mathcal{C}_{n,d}$, does $G_0$ attains maximum $A_\alpha$ spectral radius?
\end{problem}

\section{The maximum $A_\alpha$ spectral radius of strongly connected digraphs with given vertex connectivity}

Let $\mathcal{D}_{n,k}$ denote the set of strongly connected digraphs
with order $n$ and vertex connectivity $\kappa(G)=k\geq1$. If $k=n-1$,
then $\mathcal{D}_{n,k}=\{\overset{\longleftrightarrow}{K_{n}}\}$.
So we only consider the cases $1\leq k \leq n-2$.

Foe $1\leq m \leq n-k-1$, $K(n,k,m)$ denote the digraph $\overset{\longleftrightarrow}{K_k}\vee(\overset{\longleftrightarrow}{K_n}_{-k-m}\cup
\overset{\longleftrightarrow}{K_m})+ E$, where $E=\{(u,v)|u\in V(\overset{\longleftrightarrow}{K_{m}}),
v\in V(\overset{\longleftrightarrow}{K_n}_{-k-m})\}$ (see Figure \ref{Fig.1.}).
Let $\mathcal{K}(n,k)=\{K(n,k,m): \ 1\leq m \leq n-k-1\}$. Clearly $\mathcal{K}(n,k)\subset\mathcal{D}_{n,k}$.

In \cite{LSWY}, Lin and Shu et al. proved that $K(n,k,n-k-1)$ or $K(n,k,1)$ attains the maximum $A_0$ spectral radius among all strongly connected
digraphs with given vertex connectivity. In \cite{HoYo}, Hong and You determined that $K(n,k,n-k-1)$ also attains the maximum $A_{\frac{1}{2}}$ spectral radius
among all strongly connected digraphs with given vertex connectivity. We generalize their results to $0\leq\alpha<1$.

\begin{figure}[H]
\begin{centering}
\includegraphics[scale=0.6]{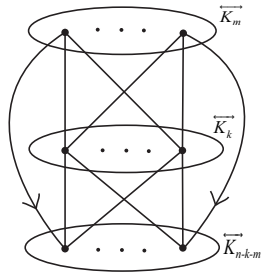}
\caption{The digraph $K(n,k,m)$}\label{Fig.1.}
\end{centering}
\end{figure}


\noindent\begin{lemma}\label{le:11} (\cite{BoMu}) Let $G$ be a strongly
connected digraph with $\kappa(G)=k$. Suppose that $S$ is a $k$-vertex cut
of $G$ and $G_1,G_2,\ldots,G_t$ are the strongly connected components of $G-S$.
Then there exists an ordering of $G_1,G_2,\ldots,G_t$ such that for $1\leq i\leq t$
and any $v\in V(G_i)$, every tail of $v$ is in $\bigcup^i_{j=1}G_{j}$.
\end{lemma}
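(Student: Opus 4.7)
The plan is to apply the standard condensation argument to $G-S$. I would construct an auxiliary digraph $\widetilde D$ whose vertices are the strong components $G_1,\dots,G_t$, placing an arc from $G_i$ to $G_j$ (for $i\neq j$) whenever some vertex of $G_i$ has an arc in $G-S$ to some vertex of $G_j$. The heart of the proof is that $\widetilde D$ is acyclic and therefore admits a topological ordering; reading off the components in that order is exactly the ordering the lemma asks for. (The statement implicitly concerns tails of $v$ that lie in $V(G-S)$; tails inside $S$ are irrelevant since $S$ was removed.)

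First I would verify that $\widetilde D$ has no directed cycle. Suppose to the contrary that $G_{i_1}\!\to\! G_{i_2}\!\to\!\cdots\!\to\! G_{i_s}\!\to\! G_{i_1}$ is a directed cycle in $\widetilde D$. Pick one representative inter-component arc for each step, and stitch these arcs together using directed paths inside each $G_{i_k}$, which exist because every $G_{i_k}$ is strongly connected. This produces a closed directed walk in $G-S$ that visits all of $G_{i_1},\dots,G_{i_s}$, forcing them to live in a single strong component and contradicting their distinctness.

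Next I would invoke the elementary fact that every finite acyclic digraph admits a topological ordering (proved by iteratively removing a source, which must exist in a finite DAG). Relabel the components $G_1,\dots,G_t$ consistent with such an ordering, so that every arc of $\widetilde D$ runs from some $G_j$ to some $G_i$ with $j<i$.

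Finally I would check the desired property: fix $1\leq i\leq t$ and $v\in V(G_i)$, and let $u$ be a tail of $v$ lying in $V(G-S)$, so $u\in V(G_j)$ for a unique $j$. Either $j=i$, or the arc $(u,v)$ induces an arc $G_j\to G_i$ in $\widetilde D$, which forces $j<i$ by the topological ordering. In either case $u\in\bigcup_{\ell=1}^{i} V(G_\ell)$, as required. Nothing here is genuinely hard; the only mild obstacle is recognizing the lemma as a thinly disguised restatement of the DAG-condensation principle and correctly interpreting the scope of ``every tail of $v$''.
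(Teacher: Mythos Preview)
Your argument is correct and is the standard proof: the condensation of $G-S$ is a DAG, and a topological ordering of its vertices yields exactly the ordering of $G_1,\dots,G_t$ claimed. Note, however, that the paper does not supply its own proof of this lemma at all---it is simply quoted from Bondy and Murty \cite{BoMu}---so there is nothing to compare against; your write-up is precisely the argument one finds in any standard text, and your reading of ``every tail of $v$'' as meaning every in-neighbor lying in $V(G-S)$ is the intended one (and is how the lemma is used in Remark~\ref{re:1}).
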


\noindent\begin{remark}\label{re:1}
By Lemma \ref{le:11}, we know that there exists a strongly
connected component of $G-S$, say $G_1$ with $|V(G_1)|=m$ such that
for any $i\in V(G_1)$, $|N^-_i|=0$, where $N_i^-=\{j\in V(G-S-G_1): (j,i)\in E(G)\}$.
Let $G_2=G-S-G_1$. We add arcs to $G$ until both induced subdigraph
of $V(G_1)\cup S$ and induced subdigraph of $V(G_2)\cup S$ attain to
complete digraphs, add arc $(u,v)$ for any $u\in V(G_1)$ and any $v\in V(G_2)$.
Denote the resulting digraph by $H$. Since $G$ is $k$-strongly connected, then
$H=K(n,k,m)\in\mathcal{K}(n,k)\subset\mathcal{D}_{n,k}$. By Corollary \ref{co:1}, we have
$\lambda_\alpha(G)\leq\lambda_\alpha(H)$, with equality if and only if $G\cong H$. Therefore,
the digraph which achieves the maximum $A_\alpha$ spectral radius among all digraphs
in $\mathcal{D}_{n,k}$ must be some digraph in $\mathcal{K}(n,k)$.
\end{remark}

\noindent\begin{theorem}\label{th:c-5} Let $n,k,m$ be positive integers
such that $1\leq k \leq n-2$ and $1\leq m \leq n-k-1$. Then
$$\resizebox{.9\hsize}{!}
{$\lambda_\alpha(K(n,k,m))= \frac{n-2-\alpha m+\alpha n+\sqrt{
(1-\alpha)^2n^2+(6\alpha-2\alpha^2-4)mn+(2-\alpha)^2m^2+4(1-\alpha)km}}{2}$}.$$
\end{theorem}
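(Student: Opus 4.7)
The plan is to compute $\lambda_\alpha(K(n,k,m))$ via an equitable partition. Set $V_1 = V(\overset{\longleftrightarrow}{K_k})$, $V_2 = V(\overset{\longleftrightarrow}{K_m})$, and $V_3 = V(\overset{\longleftrightarrow}{K_{n-k-m}})$. From the definition of $K(n,k,m)$, every vertex in $V_i$ has the same number of out-neighbours inside each $V_j$, so this tripartition is equitable for $A_\alpha(K(n,k,m))$. By Lemma~\ref{le:c1}, $\lambda_\alpha(K(n,k,m)) = \rho(B)$, where the equitable quotient matrix is
$$B = \begin{pmatrix}
k-1+\alpha(n-k) & (1-\alpha)m & (1-\alpha)(n-k-m) \\
(1-\alpha)k & m-1+\alpha(n-m) & (1-\alpha)(n-k-m) \\
(1-\alpha)k & 0 & n-k-m-1+\alpha k
\end{pmatrix}.$$

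The computation of $\rho(B)$ is unlocked by a single row operation. Since $b_{13} = b_{23}$, subtracting row~$1$ from row~$2$ of $\lambda I - B$ produces the row $(\alpha n - 1 - \lambda)(1, -1, 0)$. This both exposes $\lambda = \alpha n - 1$ as an eigenvalue and, after the operation, a cofactor expansion along the simplified row yields the factorisation
$$\det(\lambda I - B) = \bigl(\lambda - (\alpha n - 1)\bigr)\Bigl[(\lambda - p)(\lambda - q) - (1-\alpha)^2 k(n-k-m)\Bigr],$$
where $p = k + m - 1 + \alpha(n - k - m)$ and $q = n - k - m - 1 + \alpha k$.

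The two roots of the bracketed quadratic are $\bigl(p + q \pm \sqrt{(p-q)^2 + 4(1-\alpha)^2 k(n-k-m)}\bigr)/2$. A short check gives $p + q = n - 2 - \alpha m + \alpha n$, matching the linear part of the stated formula. Expanding $(p-q)^2 + 4(1-\alpha)^2 k(n-k-m)$ as a polynomial in $\alpha$ reproduces, coefficient by coefficient, the radicand in the theorem. Since the quotient digraph on three super-vertices is strongly connected (one can travel $V_3 \to V_1 \to V_2 \to V_3$), $B$ is irreducible and nonnegative, so Perron--Frobenius guarantees that $\rho(B)$ is attained by a positive eigenvector; as the eigenvector associated with $\alpha n - 1$ has a negative component, $\rho(B)$ must equal the larger root of the quadratic, giving the claimed formula.

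The substantive insight is the row operation (Row~$2$)$-$(Row~$1$), which simultaneously reveals the ``extra'' eigenvalue $\alpha n - 1$ and reduces an \emph{a~priori} cubic problem to a quadratic one. The main obstacle is the clerical task of expanding $(p-q)^2 + 4(1-\alpha)^2 k(n-k-m)$ in powers of $\alpha$ and matching it with $(1-\alpha)^2 n^2 + (6\alpha - 2\alpha^2 - 4) mn + (2-\alpha)^2 m^2 + 4(1-\alpha) km$; this step is tedious but entirely mechanical.
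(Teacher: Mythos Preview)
Your proof is correct and follows essentially the same approach as the paper: both use the equitable quotient matrix on the tripartition $\{V(\overset{\longleftrightarrow}{K_k}),\,V(\overset{\longleftrightarrow}{K_m}),\,V(\overset{\longleftrightarrow}{K_{n-k-m}})\}$, factor out $\lambda-(\alpha n-1)$, and identify $\lambda_\alpha$ with the larger root of the remaining quadratic. The only cosmetic difference is how the root $\alpha n-1$ is ruled out as the spectral radius---the paper invokes Corollary~\ref{co:c3} to get $\lambda_\alpha>\alpha\Delta^+=\alpha(n-1)>\alpha n-1$, while you argue via irreducibility of $B$ and the sign of the Perron eigenvector.
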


\begin{proof} Let $G=K(n,k,m)$, and $S$ be a $k$-vertex cut of
$G$. Suppose that $G_1$ with $|V(G_1)|=m$ and $G_2$ with $|V(G_2)|=n-k-m$
are two strongly connected components, i.e., two complete subdigraphs of $G-S$
with arcs $\{(u,v): u\in V(G_1), v\in V(G_2)\}$. Then
$$\resizebox{.9\hsize}{!}{$A_\alpha(G)=\left(
  \begin{array}{ccc}
     (1-\alpha)J_m+(\alpha n-1)I_m & (1-\alpha)J_{m\times k} & (1-\alpha)J_{m\times (n-k-m)}\\
     (1-\alpha)J_{k\times m}  & (1-\alpha)J_k+(\alpha n-1)I_k & (1-\alpha)J_{k\times (n-k-m)} \\
    0_{(n-k-m)\times m} & (1-\alpha)J_{(n-k-m)\times k} & (1-\alpha)J_{n-k-m}+(\alpha(n-m)-1)I_{n-k-m}\\
  \end{array}
\right)$},$$
where $I_p$ be the $p\times p$ identity matrix and $J_{p\times q}$ be
the $p\times q$ matrix in which every entry is 1, or simply $J_p$ if $p=q$, and $0_{p\times p}$ denotes
the $p\times q$ zero matrix.

Note that the matrix $A_\alpha(G)$ has the following equitable quotient matrix $B(G)$ with respect to the partition
$\{V(G_1),S, V(G_2)\}$ of $V(G)$.
$$B(G)=\left(
  \begin{array}{cccc}
  \alpha(n-m)+m-1 &(1-\alpha)k & (1-\alpha)(n-k-m) \\
    (1-\alpha)m & \alpha(n-k)+k-1 &(1-\alpha)(n-k-m) \\
    0 & (1-\alpha)k& n-k-m-1+\alpha k
  \end{array}
\right).$$
Then by Lemma \ref{le:c1}, $\lambda_\alpha(G)$ is also the spectral radius of $B(G)$. Since $A_\alpha(G)$ is nonnegative
irreducible matrix, $B(G)$ is also nonnegative irreducible matrix. By Corollary \ref{co:c3}, we have
$\rho(B)=\lambda_\alpha(G)>\alpha(n-1)>\alpha n-1$. Therefore, we can easily see that
$\lambda_\alpha(G)$ is the largest root of the quadratic equation
$x^2-(\alpha n+n-\alpha m-2)x+\alpha n^2-\alpha n-2\alpha nm-m^2+\alpha km+\alpha m+\alpha m^2-n+mn+1-km=0$, thus we have
$$\resizebox{.9\hsize}{!}
{$\lambda_\alpha(K(n,k,m))= \frac{n-2-\alpha m+\alpha n+\sqrt{
(1-\alpha)^2n^2+(6\alpha-2\alpha^2-4)mn+(2-\alpha)^2m^2+4(1-\alpha)km}}{2}$}.$$
\end{proof}

\noindent\begin{remark} Noting that $\overset{\longleftrightarrow}{K_{n}}$ is the unique
digraph which achieves the maximum $A_\alpha(G)$ spectral radius $n-1$ among all strongly connected
digraphs, and $K(n,n-2,1)=\overset{\longleftrightarrow}{K_{n}}-\{(u,v)\}$
where $u,v\in V(\overset{\longleftrightarrow}{K_{n}})$, by Lemma \ref{le:2}
and Theorem \ref{th:c-5}, we deduce that $K(n,n-2,1)$ is the unique
digraph which achieves the second maximum $A_\alpha(G)$ spectral radius
$$\frac{n+\alpha n-2-\alpha+\sqrt{
(1-\alpha)^2n^2+2\alpha(1-\alpha)n+\alpha^2+4\alpha-4}}{2}$$ among all
strongly connected digraphs.
\end{remark}

\noindent\begin{theorem}\label{th:c-6} Let $n,k$ be positive integers
such that $1\leq k \leq n-2$, $G\in\mathcal{D}_{n,K}$. Then

$(i)$ for $\alpha=0$, $\lambda_\alpha(G)\leq\frac{n-2+\sqrt{n^2-4n+4k+4}}{2}$ with equality if and only if
$G\cong K(n,k,n-k-1)$ or $G\cong K(n,k,1)$.

$(ii)$ For $0<\alpha<1$, $\lambda_\alpha(G)\leq\frac{n-2+\alpha+\alpha k+\sqrt{n^2+(2\alpha-4-2\alpha k)n+\alpha^2+\alpha^2k^2-4\alpha+2\alpha^2k-4\alpha k+4k+4}}{2},$
with equality if and only if $G\cong K(n,k,n-k-1)$.
\end{theorem}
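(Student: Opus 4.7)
The plan is to combine Remark \ref{re:1} with Theorem \ref{th:c-5}: Remark \ref{re:1} already forces any maximizer of $\lambda_\alpha$ on $\mathcal{D}_{n,k}$ to lie in $\mathcal{K}(n,k)$, so it suffices to maximize
$\phi(m) := 2\lambda_\alpha(K(n,k,m)) = (\alpha n + n - \alpha m - 2) + \sqrt{f(m)}$
over $m \in \{1,2,\ldots,n-k-1\}$, where $f(m) = (1-\alpha)^2 n^2 + (6\alpha - 2\alpha^2 - 4)mn + (2-\alpha)^2 m^2 + 4(1-\alpha)km$ is the discriminant supplied by Theorem \ref{th:c-5}. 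Part (i) and part (ii) will then follow by locating the maximizing $m$ and substituting back.

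For part (i) with $\alpha = 0$, the radicand collapses to the convex quadratic $f(m) = n^2 + 4m^2 + 4(k-n)m$ in $m$, whose vertex lies at $m = (n-k)/2 \in [1,n-k-1]$. Consequently $f$ attains its maximum on the admissible range only at the two endpoints, and a direct computation gives $f(1) = f(n-k-1) = n^2 - 4n + 4k + 4$. This delivers both extremal digraphs $K(n,k,1)$ and $K(n,k,n-k-1)$ at the stated value.

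For part (ii) with $0 < \alpha < 1$, set $L := n-k-1$ and I would show $\phi(L) > \phi(m)$ strictly for every $1 \leq m \leq L-1$. Rationalizing,
$\phi(L) - \phi(m) = (L-m)\bigl[-\alpha + g(m)/(\sqrt{f(L)} + \sqrt{f(m)})\bigr]$,
where direct expansion factors $f(L) - f(m) = (L-m)\, g(m)$ with $g(m) := \alpha(2-\alpha)n + (2-\alpha)^2(m-1) - k\alpha^2$. The desired inequality therefore reduces to
$g(m) > \alpha\bigl[\sqrt{f(L)} + \sqrt{f(m)}\bigr]$ for every such $m$.

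The main obstacle --- and the crux of the proof --- is bounding the two radicals. Here Corollary \ref{co:c2} is the right tool: since $K(n,k,m)$ is a proper subdigraph of $\overset{\longleftrightarrow}{K_n}$ for $1 \leq m \leq n-k-1$ (the arcs from $V(G_2)$ to $V(G_1)$ are missing and both sets are non-empty), we have $\lambda_\alpha(K(n,k,m)) < n - 1$ strictly. Combining this with the identity $\sqrt{f(m)} = 2\lambda_\alpha(K(n,k,m)) - T(m)$, where $T(m) := \alpha n + n - \alpha m - 2$, gives the strict bounds $\sqrt{f(m)} < n - \alpha(n-m)$ and $\sqrt{f(L)} < n - \alpha(k+1)$; summing and multiplying by $\alpha$ yields $\alpha[\sqrt{f(L)} + \sqrt{f(m)}] < 2\alpha n - \alpha^2(n+k+1-m)$. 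A short algebraic expansion confirms
$g(m) - \bigl(2\alpha n - \alpha^2(n+k+1-m)\bigr) = 4(1-\alpha)(m-1) \geq 0$,
so chaining the two inequalities produces the desired strict inequality $g(m) > \alpha[\sqrt{f(L)} + \sqrt{f(m)}]$ --- the strictness being carried by the subdigraph bound when $m = 1$ and by the factor $4(1-\alpha)(m-1)$ when $m > 1$. Hence $\phi$ attains its unique maximum at $m = L$, giving $K(n,k,n-k-1)$ as the unique extremal digraph and the stated value for $\lambda_\alpha$.
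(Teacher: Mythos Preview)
Your proof is correct. For part (i) it essentially coincides with the paper's argument: both reduce to $\mathcal{K}(n,k)$ via Remark \ref{re:1}, invoke the explicit formula of Theorem \ref{th:c-5}, observe convexity in $m$, and check that the two endpoint values agree.

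For part (ii) your route genuinely differs from the paper's. The paper first proves that $\phi(m)=2\lambda_\alpha(K(n,k,m))$ is convex in $m$ by computing $\partial^2\phi/\partial m^2>0$, thereby reducing the problem to the two endpoints $m=1$ and $m=n-k-1$, and then compares those endpoints via an algebraic contradiction: assuming $\phi(n-k-1)\le\phi(1)$ forces $\sqrt{\beta}\ge n-\alpha n+\alpha$, which is shown to be impossible. You bypass the convexity/calculus step entirely and compare $\phi(L)$ with every $\phi(m)$ in one stroke, the key being the spectral bound $\lambda_\alpha(K(n,k,m))<n-1$ from Corollary \ref{co:c2}; this yields $\sqrt{f(m)}<n-\alpha(n-m)$ directly (for $m=1$ this is precisely the inequality $\sqrt{\beta}<n-\alpha n+\alpha$ that the paper obtains only by contradiction). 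Your approach is more conceptual---the crucial estimate has an immediate graph-theoretic meaning (proper subdigraph of $\overset{\longleftrightarrow}{K_n}$)---and avoids the somewhat laborious second-derivative computation; the paper's approach is more purely algebraic and self-contained once the formula in Theorem \ref{th:c-5} is in hand.
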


\begin{proof} By Remark \ref{re:1}, $\lambda_\alpha(G)\leq K(n,k,m)$ for some $m$, where $1\leq m \leq n-k-1$. By Theorem \ref{th:c-5}, we have
$\resizebox{.85\hsize}{!}
{$\lambda_\alpha(K(n,k,m))= \frac{n-2-\alpha m+\alpha n+\sqrt{
(1-\alpha)^2n^2+(6\alpha-2\alpha^2-4)mn+(2-\alpha)^2m^2+4(1-\alpha)km}}{2}$}.$ Now we want to show that the maximum value of $\lambda_\alpha(K(n,k,m))$ must be
taken at either $m=1$ or at $m=n-k-1$.

Let $\resizebox{.9\hsize}{!}
{$ f(n,k,m)=n-2-\alpha m+\alpha n+\sqrt{
(1-\alpha)^2n^2+(6\alpha-2\alpha^2-4)mn+(2-\alpha)^2m^2+4(1-\alpha)km}$}$.
\\Then
$$\frac{\partial f}{\partial m}=-\alpha+\frac{1}{2}\frac{(6\alpha-2\alpha^2-4)n+2(2-\alpha)^2m+4(1-\alpha)k}{\sqrt{
(1-\alpha)^2n^2+(6\alpha-2\alpha^2-4)mn+(2-\alpha)^2m^2+4(1-\alpha)km}},$$
\begin{align*}
\frac{\partial^2 f}{\partial m^2}&=\frac{1}{4}\frac{16k^2(2\alpha-\alpha^2-1)+16nk(-5\alpha+4\alpha^2+2-\alpha^3)}{(
(1-\alpha)^2n^2+(6\alpha-2\alpha^2-4)mn+(2-\alpha)^2m^2+4(1-\alpha)km)^{\frac{3}{2}}}\\
&=\frac{1}{4}\frac{-16k^2(\alpha-1)^2+16nk(2-\alpha)(\alpha-1)^2}{(
(1-\alpha)^2n^2+(6\alpha-2\alpha^2-4)mn+(2-\alpha)^2m^2+4(1-\alpha)km)^{\frac{3}{2}}}\\
&\geq\frac{1}{4}\frac{-16k^2(\alpha-1)^2+16(k+2)k(2-\alpha)(\alpha-1)^2}{(
(1-\alpha)^2n^2+(6\alpha-2\alpha^2-4)mn+(2-\alpha)^2m^2+4(1-\alpha)km)^{\frac{3}{2}}}\\
&=\frac{1}{4}\frac{16k^2(\alpha-1)^2(1-\alpha)+32k(2-\alpha)(\alpha-1)^2}{(
(1-\alpha)^2n^2+(6\alpha-2\alpha^2-4)mn+(2-\alpha)^2m^2+4(1-\alpha)km)^{\frac{3}{2}}}
>0.
\end{align*}
Thus, for fixed $n$ and $k$, the maximum value of $f$ must be
taken at either $m=1$ or at $m=n-k-1$.

In the following, we want to compare $\lambda_\alpha(K(n,k,1))$ and $\lambda_\alpha(K(n,k,n-k-1))$.
Let $\beta=(\alpha^2+1-2\alpha)n^2+(6\alpha-2\alpha^2-4)n+4-4\alpha+\alpha ^2-4\alpha k+4k$
and $\gamma=n^2+(2\alpha-4-2\alpha k)n+\alpha^2+\alpha^2k^2-4\alpha+2\alpha^2k-4\alpha k+4k+4$. Then
\begin{align*}
f(n,k,n-k-1)-f(n,k,1)&=2\alpha+\alpha k-\alpha n+\sqrt{\gamma}-\sqrt{\beta}\\
&=\alpha(n-k-2)(-1+\frac{2n-\alpha(k+n)}{\sqrt{\gamma}+\sqrt{\beta}}).
\end{align*}

For $\alpha=0$, we have $f(n,k,n-k-1)-f(n,k,1)=0$, that is $\lambda_\alpha(n,k,n-k-1)=\lambda_\alpha(n,k,1)=\frac{n-2+\sqrt{n^2-4n+4k+4}}{2}$
 Therefore, $\lambda_\alpha(G)\leq\frac{n-2+\sqrt{n^2-4n+4k+4}}{2}$ with equality if and only if
$G\cong K(n,k,n-k-1)$ or $G\cong K(n,k,1)$.

For $0<\alpha<1$. We assume that $n>k+2$ since in case $n=k+2$ there is only one value of
$m$ under consideration. Now suppose that $f(n,k,n-k-1)-f(n,k,1)\leq0$. We deduce a contradiction. We have simultaneously
$$\sqrt{\gamma}+\sqrt{\beta}\geq 2n-\alpha(k+n) \ \textrm{ and}  \ -\sqrt{\gamma}+\sqrt{\beta}\geq2\alpha+\alpha k-\alpha n.$$
So $\sqrt{\beta}\geq n-\alpha n+\alpha$ and $\beta\geq(n-\alpha n+\alpha)^2$.
However, $(n-\alpha n+\alpha)^2-\beta=-4\alpha n+4n-4+4\alpha k+4\alpha-4k=4(1-\alpha)(n-k-1)>0$, that is $(n-\alpha n+\alpha)^2>\beta$.
Thus $f(n,k,n-k-1)-f(n,k,1)>0$. Therefore,
$$\resizebox{.9\hsize}{!}
{$\lambda_\alpha(G)\leq\lambda_\alpha(n,k,n-k-1)=\frac{n-2+\alpha+\alpha k+\sqrt{n^2+(2\alpha-4-2\alpha k)n+\alpha^2+\alpha^2k^2-4\alpha+2\alpha^2k-4\alpha k+4k+4}}{2}$},$$
with equality if and only if $G\cong K(n,k,n-k-1)$.
\end{proof}

\section{The maximum $A_\alpha$ spectral radius of strongly connected digraphs with given arc connectivity}

Let $\mathcal{L}_{n,k}$ denote the set of strongly connected digraphs
with order $n$ and arc connectivity $\kappa'(G)=k\geq1$. If $\kappa'(G)=k=n-1$,
then $\mathcal{L}_{n,k}=\{\overset{\longleftrightarrow}{K_{n}}\}$.
So we only consider the cases $1\leq k \leq n-2$.

In  \cite{LD}, Lin and Drury proved that $K(n,k,n-k-1)$ or $K(n,k,1)$ attains the maximum $A_0$ spectral radius among all strongly connected
digraphs with given arc connectivity. In \cite{XiWa3}, Xi and Wang determined that $K(n,k,n-k-1)$ also attains the maximum $A_{\frac{1}{2}}$ spectral radius
among all strongly connected digraphs with given arc connectivity. We generalize their results to $0\leq\alpha<1$.

\noindent\begin{lemma}\label{le:12} (\cite{XiWa3}) Let $G$ be a strongly connected digraph
with order $n$ and arc connectivity $k\geq1$, and $S$ be an arc cut set of $G$ of size $k$
such that $G-S$ has exactly two strongly connected components, say $G_1$ and $G_2$ with $|V(G_1)|=n_1$ and $|V(G_2)|=n_2$, where $n_1+n_2=n$.
If $d_v^+>k$ and $d_v^->k$ for each vertex $v\in V(G)$, then $n_1\geq k+2$, $n_2\geq k+2$.
\end{lemma}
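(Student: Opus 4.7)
The plan is first to pin down the structure of the minimum arc cut $S$, and then to run short double-counting arguments for $n_2$ and $n_1$ in turn.

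First, I would identify $S$ with the set of arcs between the two sides in one specific direction. Since $G-S$ has exactly the two strong components $G_1,G_2$, they admit a topological ordering; relabelling if necessary, say $G_1$ precedes $G_2$, so $G-S$ has no arc from $V(G_2)$ to $V(G_1)$. Hence every such arc of $G$ lies in $S$. On the other hand, the set $T$ of all arcs of $G$ going from $V(G_2)$ to $V(G_1)$ is itself an arc cut set of $G$ (removing it destroys all $V(G_2)\to V(G_1)$ passages), so $|T|\geq\kappa'(G)=k=|S|$. Combined with $T\subseteq S$, this forces $S=T$ and $|T|=k$.

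Next, to prove $n_2\geq k+2$, I would argue by contradiction, assuming $n_2\leq k+1$. For each $v\in V(G_2)$, split $N^+(v)=(N^+(v)\cap V(G_1))\cup (N^+(v)\cap (V(G_2)\setminus\{v\}))$; the second part has size at most $n_2-1$, while $d_v^+\geq k+1$ then forces $|N^+(v)\cap V(G_1)|\geq k+2-n_2$. Summing over $v\in V(G_2)$, the left-hand side totals $|T|=k$, yielding $k\geq n_2(k+2-n_2)$. On the integer range $1\leq n_2\leq k+1$, the concave quadratic $n_2(k+2-n_2)$ equals $k+1$ at both endpoints and so is at least $k+1$ throughout, contradicting $k\geq n_2(k+2-n_2)$.

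The bound $n_1\geq k+2$ then follows by the symmetric argument in which the same $k$ arcs of $S$ now play the role of in-arcs into $V(G_1)$: splitting $N^-(u)$ for $u\in V(G_1)$ and using $d_u^-\geq k+1$ gives $|N^-(u)\cap V(G_2)|\geq k+2-n_1$, and summation over $V(G_1)$ again forces $k\geq n_1(k+2-n_1)$, impossible for $1\leq n_1\leq k+1$ by the same concavity check. The main obstacle is really the structural first step — correctly pinning $S$ down as a one-directional cut between $V(G_1)$ and $V(G_2)$; once this is in place, the double-counting estimates and their symmetric counterpart are elementary.
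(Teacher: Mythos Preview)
The paper does not prove this lemma; it is quoted from \cite{XiWa3} without proof, so there is no in-paper argument to compare against. Your proposal is correct: the identification $S=T$ (all arcs of $G$ from $V(G_2)$ to $V(G_1)$) via $T\subseteq S$ together with $|T|\geq\kappa'(G)=k$ is exactly the structural step needed, and the double-counting then gives $k\geq n_i(k+2-n_i)$, which the endpoint/concavity check rules out for $1\leq n_i\leq k+1$. One cosmetic remark: your ``relabelling if necessary'' is harmless because the two counting arguments you run (out-degrees on the $G_2$ side, in-degrees on the $G_1$ side) are tied to the \emph{direction} of $S$, not to the original labels, so both bounds $n_1\geq k+2$ and $n_2\geq k+2$ are recovered regardless of which component was originally called $G_1$.
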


\noindent\begin{lemma}\label{le:13} Let $G\in\mathcal{L}_{n,k}$ containing a vertex of outdegree $k$. Then
$\lambda_\alpha(G)\leq \lambda_\alpha(K(n,k,n-k-1))$.
\end{lemma}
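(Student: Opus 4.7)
The plan is to use the vertex of out-degree $k$ to build an explicit tripartition of $V(G)$, then add arcs to embed $G$ into a spanning superdigraph that is literally isomorphic to $K(n,k,n-k-1)$. Monotonicity of $\lambda_\alpha$ under arc addition (Corollary \ref{co:1}) will then close the argument, so no spectral computation is needed.

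Concretely, let $v\in V(G)$ with $d_v^+=k$, and set $N=N^+_v$ and $W=V(G)\setminus(\{v\}\cup N)$; since $1\leq k\leq n-2$, we have $|N|=k$ and $|W|=n-k-1\geq 1$. Define a digraph $G'$ on the same vertex set by taking exactly the following arcs:
\begin{enumerate}[(a)]
\item the $k$ original out-arcs of $v$, namely $\{(v,u):u\in N\}$;
\item every arc $(u,v)$ with $u\in N\cup W$, so that $v$ has in-arcs from all other vertices;
\item every possible arc between distinct vertices of $V(G)\setminus\{v\}$, so that the induced subdigraph on $N\cup W$ is a complete digraph.
\end{enumerate}
By construction $E(G)\subseteq E(G')$, because $v$'s out-neighbourhood is still $N$, any in-arc of $v$ in $G$ is of the form $(u,v)$ with $u\in N\cup W$ and hence is in $G'$, and any arc of $G$ with both endpoints in $V(G)\setminus\{v\}$ is present in the complete digraph on this set. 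Thus $G$ is a spanning subdigraph of $G'$.

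It remains to check that $G'\cong K(n,k,n-k-1)$. Identify $\{v\}$ with the single-vertex part $\overset{\longleftrightarrow}{K_1}$ (corresponding to the choice $n-k-m=1$, i.e. $m=n-k-1$), $N$ with the central clique $\overset{\longleftrightarrow}{K_k}$, and $W$ with $\overset{\longleftrightarrow}{K_{n-k-1}}$ (the part playing the role of $\overset{\longleftrightarrow}{K_m}$). Then: within $N$ and within $W$ we get complete digraphs, and the arcs between $N$ and $W$ in $G'$ are all possible arcs in both directions, which matches the join $\overset{\longleftrightarrow}{K_k}\vee(\overset{\longleftrightarrow}{K_1}\cup\overset{\longleftrightarrow}{K_{n-k-1}})$ as far as the $N\leftrightarrow W$ edges are concerned. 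The arcs involving $v$ match too: $v\leftrightarrow N$ gives both directions (the join with $N$), the arcs $W\to v$ reproduce the set $E=\{(u,v'):u\in V(\overset{\longleftrightarrow}{K_m}),v'\in V(\overset{\longleftrightarrow}{K_{n-k-m}})\}$ of the definition, and $v$ has no out-arcs to $W$, again as in $K(n,k,n-k-1)$. Consequently, Corollary \ref{co:1}(i) yields
$$\lambda_\alpha(G)\leq \lambda_\alpha(G')=\lambda_\alpha(K(n,k,n-k-1)),$$
as claimed.

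The only point that requires attention is the bookkeeping of which arcs are added and the verification of the isomorphism in the previous paragraph; there is no genuine analytic obstacle, and in particular the strong-connectivity hypothesis on $G$ is used only implicitly through the fact that $G'$ (being isomorphic to $K(n,k,n-k-1)\in\mathcal{K}(n,k)$) is itself strongly connected, so that $\lambda_\alpha(G')$ is the usual Perron value discussed in Section~2.
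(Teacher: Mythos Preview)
Your proof is correct and follows essentially the same route as the paper: take the vertex $v$ of out-degree $k$, add every missing arc whose tail is not $v$, and observe that the resulting superdigraph is exactly $K(n,k,n-k-1)$, so Corollary~\ref{co:1} gives the inequality. The paper's version is simply more terse---it does not name $N$ and $W$ separately and does not spell out the isomorphism---but the construction and the logic are identical.
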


\begin{proof} Let $w$ be a vertex of $G$ such that $d_w^+=k$. Then the arcs out-incident to $w$ form an arc cut set of size $k$. Adding all possible arcs from $G\setminus\{w\}$ to $G\setminus\{w\}\cup\{w\}$, we obtain a digraph $H$, which is isomorphic to $K(n,k,n-k-1)$, the arc connectivity of $H$ remains equal to $k$. If
$G\neq K(n,k,n-k-1)$, then $\lambda_\alpha(G)<\lambda_\alpha(K(n,k,n-k-1))$ by Corollary \ref{co:1}. So the result follows.
\end{proof}

\noindent\begin{lemma}\label{le:14} Let $G\in\mathcal{L}_{n,k}$ containing a vertex of indegree $k$. Then
$\lambda_\alpha(G)\leq \lambda_\alpha(K(n,k,1))$.
\end{lemma}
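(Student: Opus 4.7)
The plan is to mirror the proof of Lemma~\ref{le:13} with ``in'' and ``out'' swapped and with the target extremal digraph $K(n,k,n-k-1)$ replaced by $K(n,k,1)$. Let $w\in V(G)$ satisfy $d_w^-=k$ and put $S=N^-_w$, so $|S|=k$. The $k$ arcs in-incident to $w$ form an arc cut set of $G$: removing them leaves $w$ with no in-neighbour, so $w$ becomes unreachable and $G-S$ is no longer strongly connected. This is the direct analogue of the arc cut used in Lemma~\ref{le:13}.

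Next, I would enlarge $G$ to a spanning super-digraph $H$ by adding every missing arc whose head lies in $V(G)\setminus\{w\}$. Equivalently, I add every missing arc among $V(G)\setminus\{w\}$ and every missing arc of the form $(w,v)$, while adding no arc of the form $(v,w)$. Then $d_w^-(H)=k$ with $N_w^-(H)=S$, $d_w^+(H)=n-1$, and $H[V(G)\setminus\{w\}]\cong\overset{\longleftrightarrow}{K_{n-1}}$. I then identify $\{w\}$ with the part $\overset{\longleftrightarrow}{K_m}$ of $K(n,k,1)$ for $m=1$, the set $S$ with the central clique $\overset{\longleftrightarrow}{K_k}$, and $V(G)\setminus(S\cup\{w\})$ with $\overset{\longleftrightarrow}{K_{n-k-1}}$. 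Under this identification, the join contributes all bidirectional arcs between $S$ and $\{w\}\cup(V(G)\setminus(S\cup\{w\}))$, and the extra set $E=\{(u,v):u\in V(\overset{\longleftrightarrow}{K_m}),\,v\in V(\overset{\longleftrightarrow}{K_{n-k-m}})\}$ from the definition of $K(n,k,m)$ becomes precisely the out-arcs from $w$ to $V(G)\setminus(S\cup\{w\})$, all of which are present in $H$ by construction. Hence $H\cong K(n,k,1)$, and since the $k$ in-arcs at $w$ form a minimum arc cut in $K(n,k,1)$, we have $H\in\mathcal{L}_{n,k}$.

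Finally, $G$ is a spanning subdigraph of $H$, so Corollary~\ref{co:1} yields $\lambda_\alpha(G)\leq\lambda_\alpha(H)=\lambda_\alpha(K(n,k,1))$, with strict inequality whenever $G\not\cong K(n,k,1)$. The only mild subtlety lies in verifying the isomorphism $H\cong K(n,k,1)$: one must match the partition $\{w\}\mid S\mid V(G)\setminus(S\cup\{w\})$ against the partition $\overset{\longleftrightarrow}{K_m}\mid\overset{\longleftrightarrow}{K_k}\mid\overset{\longleftrightarrow}{K_{n-k-m}}$ with $m=1$ and check that all four arc types (inside each part, between $S$ and the other two parts, and from $\{w\}$ to $V(G)\setminus(S\cup\{w\})$) coincide. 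No genuine obstacle appears; the argument is a clean dualisation of Lemma~\ref{le:13}.
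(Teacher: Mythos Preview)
Your argument is correct and follows essentially the same route as the paper's proof: pick $w$ with $d_w^-=k$, observe that the in-arcs at $w$ form an arc cut of size $k$, add all missing arcs whose head lies in $V(G)\setminus\{w\}$ to obtain $H\cong K(n,k,1)$, and invoke Corollary~\ref{co:1}. The only cosmetic slip is that you define $S=N_w^-$ as a vertex set but then write ``$G-S$'' when you mean $G$ minus the $k$ in-arcs at $w$; this does not affect the substance.
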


\begin{proof} Let $w$ be a vertex of $G$ such that $d_w^-=k$. Then the arcs in-incident to $w$ form an arc cut set of size $k$. Adding all possible arcs from $w$ to $G\setminus\{w\}$, and all possible arcs from $G\setminus\{w\}$ to $G\setminus\{w\}$, we obtain a digraph $H'$, which is isomorphic to $K(n,k,1)$, the arc connectivity of $H'$ remains equal to $k$. If
$G\neq K(n,k,1)$, then $\lambda_\alpha(G)<\lambda_\alpha(K(n,k,1))$ by Corollary \ref{co:1}. So the result follows.
\end{proof}

Let $\delta^0(G)=\min\{\delta^+(G), \delta^-(G)\}$. The following result characterize the digraphs maximizes the
$A_\alpha$ spectral radius in $\mathcal{L}_{n,k}$ when $\kappa'(G)=\delta^0(G)=k\geq1$.

\noindent\begin{theorem}\label{th:c-7} Let $G\in\mathcal{L}_{n,k}$ with $\delta^0(G)=k$. Then
(i) if $\alpha=0$, $\lambda_\alpha(G)\leq\lambda_\alpha(K(n,k,1))=\lambda_\alpha(K(n,k,n-k-1))$ with equality if and only if
$G\cong K(n,k,n-k-1)$ or $G\cong K(n,k,1)$.

(ii) If $0<\alpha<1$, $\lambda_\alpha(G)\leq\lambda_\alpha(K(n,k,n-k-1)),$
with equality if and only if $G\cong K(n,k,n-k-1)$.
\end{theorem}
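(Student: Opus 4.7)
The plan is to reduce the theorem directly to Lemmas \ref{le:13} and \ref{le:14} combined with the comparison between $\lambda_\alpha(K(n,k,n-k-1))$ and $\lambda_\alpha(K(n,k,1))$ already carried out in the proof of Theorem \ref{th:c-6}. First I would unpack the hypothesis $\delta^0(G)=k$. By definition $\delta^0(G)=\min\{\delta^+(G),\delta^-(G)\}$, so either $G$ contains a vertex $w$ with $d_w^+=k$, or $G$ contains a vertex $w$ with $d_w^-=k$ (possibly both occur). In the former case Lemma \ref{le:13} yields $\lambda_\alpha(G)\leq\lambda_\alpha(K(n,k,n-k-1))$ with equality iff $G\cong K(n,k,n-k-1)$; in the latter case Lemma \ref{le:14} yields $\lambda_\alpha(G)\leq\lambda_\alpha(K(n,k,1))$ with equality iff $G\cong K(n,k,1)$.

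For part (i), Theorem \ref{th:c-6}(i) records that $\lambda_0(K(n,k,n-k-1))=\lambda_0(K(n,k,1))=\frac{n-2+\sqrt{n^2-4n+4k+4}}{2}$. Hence the two potential upper bounds coincide, and the conclusion $\lambda_0(G)\leq\frac{n-2+\sqrt{n^2-4n+4k+4}}{2}$ follows regardless of which of the two lemmas applies. The equality characterization is exactly the disjunction $G\cong K(n,k,n-k-1)$ or $G\cong K(n,k,1)$ inherited from the two lemmas.

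For part (ii), I would invoke the quantitative comparison $f(n,k,n-k-1)>f(n,k,1)$ established in the proof of Theorem \ref{th:c-6}(ii) for $0<\alpha<1$ and $n>k+2$ (the boundary case $n=k+2$ being degenerate, since then $n-k-1=1$ and the two digraphs coincide). Consequently $\lambda_\alpha(K(n,k,1))\leq\lambda_\alpha(K(n,k,n-k-1))$ with strict inequality whenever $n>k+2$. Thus in either of the two case‑split branches we obtain $\lambda_\alpha(G)\leq\lambda_\alpha(K(n,k,n-k-1))$. For the equality characterization, note that if $G$ has no vertex of outdegree $k$ then only Lemma \ref{le:14} is available and the bound is then strictly below $\lambda_\alpha(K(n,k,n-k-1))$; hence equality forces the existence of a vertex of outdegree $k$ in $G$, after which the equality clause of Lemma \ref{le:13} forces $G\cong K(n,k,n-k-1)$.

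No substantial obstacle is expected: all of the spectral work has been done in Lemmas \ref{le:13}--\ref{le:14} and in Theorem \ref{th:c-6}. The only point requiring a little care is the bookkeeping of the case split (out-degree $k$ vs.\ in-degree $k$) together with handling the degenerate case $n=k+2$ separately, and making sure the equality characterization correctly eliminates $K(n,k,1)$ in part (ii) via the strict comparison $\lambda_\alpha(K(n,k,1))<\lambda_\alpha(K(n,k,n-k-1))$.
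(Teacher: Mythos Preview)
Your proposal is correct and follows essentially the same route as the paper: split according to whether $\delta^0(G)=\delta^+(G)$ or $\delta^0(G)=\delta^-(G)$, apply Lemma~\ref{le:13} or Lemma~\ref{le:14} respectively, and then invoke the comparison $\lambda_\alpha(K(n,k,1))\leq\lambda_\alpha(K(n,k,n-k-1))$ from the proof of Theorem~\ref{th:c-6}. Your treatment of the equality case in part~(ii) and of the degenerate situation $n=k+2$ is in fact more explicit than the paper's, but the underlying argument is identical.
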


\begin{proof} Let $G$ be a digraph with arc connectivity $k$, and $\delta^0(G)=k$.

If $\delta^0(G)=\delta^+(G)$, then there exists a vertex of outdegree $k$. So by Lemma \ref{le:13},
$K(n,k,n-k-1)$ maximizes the $A_\alpha$ spectral radius in $\mathcal{L}_{n,k}$.
If $\delta^0(G)=\delta^-(G)$, then there exists a vertex of indegree $k$. So by Lemma \ref{le:14},
$K(n,k,1)$ maximizes the $A_\alpha$ spectral radius in $\mathcal{L}_{n,k}$.  Moreover, by the proof Theorem \ref{th:c-6}, we know that

$(i)$ For $\alpha=0$, $\lambda_\alpha(K(n,k,1))=\lambda_\alpha(K(n,k,n-k-1))$.
Therefore $\lambda_\alpha(G)\leq\lambda_\alpha(K(n,k,1))=\lambda_\alpha(K(n,k,n-k-1))$ with equality if and only if
$G\cong K(n,k,n-k-1)$ or $G\cong K(n,k,1)$.

$(ii)$ For $0<\alpha<1$, $\lambda_\alpha(K(n,k,n-k-1))>\lambda_\alpha(K(n,k,1))$. Therefore $\lambda_\alpha(G)\leq\lambda_\alpha(K(n,k,$ $n-k-1))$
with equality if and only if $G\cong K(n,k,n-k-1)$.
\end{proof}

\noindent\begin{theorem}\label{th:c-8}
Let $G\in\mathcal{L}_{n,k}$. Then

$(i)$ for $\alpha=0$, $\lambda_\alpha(G)\leq\lambda_\alpha(K(n,k,1))=\lambda_\alpha(K(n,k,n-k-1))$ with equality if and only if
$G\cong K(n,k,n-k-1)$ or $G\cong K(n,k,1)$.

$(ii)$ For $0<\alpha<1$, $\lambda_\alpha(G)\leq\lambda_\alpha(K(n,k,n-k-1))$, with equality if and only if $G\cong K(n,k,n-k-1)$.
\end{theorem}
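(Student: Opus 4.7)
The plan is to split the argument into two cases depending on the value of the minimum semi-degree $\delta^0(G) := \min\{\delta^+(G), \delta^-(G)\}$. If $\delta^0(G) = k$, both (i) and (ii) are exactly the conclusions of Theorem \ref{th:c-7}, so there is nothing more to do. The substance of the proof is therefore in the complementary case $\delta^0(G) > k$.

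In this remaining case I would fix a minimum arc cut $S$ of $G$ so that $G - S$ decomposes into exactly two strongly connected components $G_1$ and $G_2$, with every arc of $S$ directed from $V(G_1)$ to $V(G_2)$. Lemma \ref{le:12} then gives $n_i := |V(G_i)| \geq k+2$. I would augment $G$ to a digraph $H \in \mathcal{L}_{n,k}$ by adding every missing arc within $V(G_1)$, every missing arc within $V(G_2)$, and every missing arc from $V(G_2)$ to $V(G_1)$. These additions leave $S$ as a minimum arc cut, so $H \in \mathcal{L}_{n,k}$, and Corollary \ref{co:1} yields $\lambda_\alpha(G) \leq \lambda_\alpha(H)$ with equality only if $G = H$. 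Using Lemma \ref{le:4} together with a parallel version that migrates arc tails (applied to the left Perron vector), I would then concentrate the $k$ arcs of $S$ so that they all share a single tail $v^* \in V(G_1)$ and their heads lie in some $k$-set $T \subseteq V(G_2)$, producing a digraph $H^*$ with $\lambda_\alpha(H^*) \geq \lambda_\alpha(H)$.

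The partition $\{\{v^*\}, V(G_1)\setminus\{v^*\}, T, V(G_2)\setminus T\}$ is equitable in $H^*$, so by Lemma \ref{le:c1}, $\lambda_\alpha(H^*)$ equals the spectral radius of the associated $4 \times 4$ equitable quotient matrix $B(H^*)$. The final and most delicate step is to verify the strict inequality $\rho(B(H^*)) < \lambda_\alpha(K(n,k,n-k-1))$, where the right-hand side is given explicitly by Theorem \ref{th:c-5}. I expect this to reduce to a polynomial inequality parametrized by $n_1 \in [k+2, n-k-2]$, which a convexity-in-$n_1$ argument in the spirit of the analysis of $f(n,k,m)$ in the proof of Theorem \ref{th:c-6} should settle by reducing to the boundary value. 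The main obstacle will be to handle cleanly the extra parameter introduced by working with the $4 \times 4$ (rather than $3 \times 3$) quotient. The $\alpha = 0$ symmetric bound involving $K(n,k,1)$ follows analogously by exchanging the roles of in-degree and out-degree and invoking Lemma \ref{le:14}.
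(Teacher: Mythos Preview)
Your Case 1 reduction to Theorem \ref{th:c-7} matches the paper. In Case 2, however, the paper takes a much shorter route than your arc-migration plus $4\times4$-quotient analysis. After forming your augmented digraph $H$ (the paper calls it $\overline{H}$), the paper does not rearrange the $k$ cut arcs at all. Instead it observes that, since each arc of $S$ has its tail among at most $k$ vertices of $V(G_1)$ and its head among at most $k$ vertices of $V(G_2)$, one can choose $k$-subsets $U\subseteq V(G_1)$ and $W\subseteq V(G_2)$ containing all tails and heads respectively, and then $\overline{H}$ is a spanning subdigraph of the digraph $H_4$ obtained from $\overset{\longleftrightarrow}{K_{n_1}}\cup\overset{\longleftrightarrow}{K_{n_2}}$ by adding every arc from $U$ to $W$ and every arc from $V(G_2)$ to $V(G_1)$. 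The point is that $H_4$ has \emph{vertex} connectivity exactly $k$ (deleting $U$ disconnects it, and $n_1,n_2\geq k+2$ ensure $H_4\ncong K(n,k,1)$ and $H_4\ncong K(n,k,n-k-1)$), so the already-proved Theorem \ref{th:c-6} gives $\lambda_\alpha(H_4)<\lambda_\alpha(K(n,k,n-k-1))$ directly, with no further computation.

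Your plan is plausible in outline, but the step you flag as ``the main obstacle'' is genuinely left open: you have not shown the polynomial inequality for the $4\times4$ quotient, and the tail-migration version of Lemma \ref{le:4} you invoke is not stated in the paper (it is true by transposition, but you would also need to track strong connectivity through the iterated migrations). The paper's reduction to the vertex-connectivity theorem sidesteps all of this and requires no new spectral estimate; I would recommend adopting that argument instead.
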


\begin{proof} Let $G$ be a digraph in $\mathcal{L}_{n,k}$. Note that each vertex in the digraph $G$ has outdegree at least
$k$ and indegree at least $k$, otherwise $G\notin\mathcal{L}_{n,k}$. Then, we consider the following two cases.

{\bf Case 1.} If there exists a vertex $u$ of $G$ with $d_u^+=k$ or $d_u^-=k$. The by Theorem \ref{th:c-7}, we get the result.

{\bf Case 2.}  For arbitrary vertex $u$ of $G$, $d_u^+>k$ and $d_u^->k$. Let $S$ be an arc cut set of $G$ containing $k$ arcs, then $G-S$ consists of exactly two strongly
connected components $G_1$, $G_2$, respectively, of orders $a$, $b$ and $a+b=n$. Without loss of generality, we may assume that there are no arcs from
$G_1$ to $G_2$ in $G-S$. By Lemma \ref{le:12}, $a\geq k+2$, $b=n-a\geq k+2$, then $k+2\leq a\leq n-k-2$, $n\geq a+k+2\geq2k+4$.
Next we create a new digraph $\overline{H}$ by adding to $G$ any possible arcs from $G_2$
to $G_1\cup G_2$ or any possible arcs from $G_1$ to $G_1$ that were not present in $G$. Obviously,
the arc connectivity of $\overline{H}$ remains equal to $k$ and all
vertices of $\overline{H}$ have outdegree greater than $k$ and indegree still greater than $k$. By Corollary \ref{co:1},
the addition of any such arc will give $\lambda_\alpha(\overline{H})>\lambda_\alpha(G)$. For $k+2\leq a\leq n-k-2$, let $H''=\overset{\longleftrightarrow}{K_{a}}\cup\overset{\longleftrightarrow}{K_{n}}_{-a}$, $U=\{u_1, u_2, \cdots, u_k\}$ be a set of $k$ vertices in
$V(\overset{\longleftrightarrow}{K_{a}})$ and $W=\{v_1, v_2, \cdots, v_k\}$ be a set of $k$ vertices in $V(\overset{\longleftrightarrow}{K_n}_{-a})$. Let
$H_4$ be a digraph obtained from $H''$ by adding all possible arcs from $U$ to $W$, and adding all possible arcs from $\overset{\longleftrightarrow}{K_{n}}_{-a}$ to $\overset{\longleftrightarrow}{K_{a}}$. Then $\overline{H}$ be a spanning subdigraph of $H_4$.
Therefore, by Corollary \ref{co:1}, $\lambda_\alpha(\overline{H})\leq \lambda_\alpha(H_4)$. However, we can know that the vertex connectivity of $H_4$ is $k$ and
$H_4\ncong K(n,k,n-k-1)$ and $H_4\ncong K(n,k,1)$. Then by Theorem \ref{th:c-6}, if $\alpha=0$,
we have $\lambda_\alpha(H_4)<\lambda_\alpha(K(n,k,n-k-1))=\lambda_\alpha(K(n,k,1))$; if $0<\alpha<1$, $\lambda_\alpha(H_4)<\lambda_\alpha(K(n,k,n-k-1))$.

Therefore, combining the above two cases, we get the desired result.
\end{proof}

In sections 6 and 7, it is natural to ask: what are digraphs in $\mathcal{D}_{n,k}$ and $\mathcal{L}_{n,k}$
whose $A_\alpha$ spectral radius is minimum for each $1\leq k\leq n-2$, respectively?

By Lemma \ref{le:1}, for any strongly connected digraph $H$, $\delta^+\leq \lambda_\alpha(H)\leq\Delta^+$, with
either equality if and only if the outdegrees of all vertices in $H$ are equal. If  $G\in\mathcal{D}_{n,k}$
or $G\in\mathcal{L}_{n,k}$, then $\delta^+\geq k$, $\delta^-\geq k$. So we have $\lambda_\alpha(G)\geq k$
with the equality if and only if the outdegrees of all vertices in $G$ is $k$, and the indegree of all vertices in $G$ is also $k$ because $\delta^-\geq k$. That is $\lambda_\alpha(G)\geq k$ with the equality if and only if $G$ is a $k$-regular digraph.

\noindent\begin{theorem}\label{th:ch-9} For each $1\leq k\leq n-2$, a digraph with the minimum $A_\alpha$ spectral radius in
$\mathcal{D}_{n,k}$ or $\mathcal{L}_{n,k}$ is $k$-regular, and
$$\min\{\lambda_\alpha(G) : G\in\mathcal{D}_{n,k}\}=\min\{\lambda_\alpha(G) : G\in\mathcal{L}_{n,k}\}=k.$$
\end{theorem}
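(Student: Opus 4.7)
The proof decomposes into a lower bound with $k$-regularity as the rigidity case, and an existence argument showing the bound is attained in both families.

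For the first half, I would apply Lemma~\ref{le:1} directly to $A_\alpha(G)$, observing that its $i$-th row sum equals $\alpha d_i^+(G) + (1-\alpha) d_i^+(G) = d_i^+(G)$. Strong connectivity of $G$ makes $A_\alpha(G)$ nonnegative irreducible, so $\delta^+ \leq \lambda_\alpha(G) \leq \Delta^+$, with either equality iff all outdegrees coincide. For $G \in \mathcal{D}_{n,k}$ or $G \in \mathcal{L}_{n,k}$, the standard chain $\kappa(G) \leq \kappa'(G) \leq \delta^0(G)$ forces $\delta^+(G), \delta^-(G) \geq k$, so $\lambda_\alpha(G) \geq k$. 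Equality compels $d_i^+ = k$ for every $i$, and then $\sum d_i^+ = \sum d_i^- = nk$ combined with each $d_i^- \geq k$ forces every indegree to equal $k$ too; hence $G$ is $k$-regular. This reproduces the reasoning already outlined in the paragraph preceding the statement.

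For the second half, I would exhibit a single $k$-regular strongly connected witness $G_0$ lying in both $\mathcal{D}_{n,k}$ and $\mathcal{L}_{n,k}$; Lemma~\ref{le:1} then gives $\lambda_\alpha(G_0) = k$, showing the infimum is attained in each family. The natural candidate is the circulant digraph on $\{0, 1, \ldots, n-1\}$ with arcs $(i, i + j \bmod n)$ for $1 \leq j \leq k$. It is $k$-regular and strongly connected (the $+1$ arcs form a Hamiltonian cycle). Deleting the $k$ in-neighbors $\{n-1, n-2, \ldots, n-k\}$ of vertex $0$ removes every arc into $0$ and, since $n \geq k+2$, leaves at least one other vertex, destroying strong connectivity; hence $\kappa(G_0) \leq k$, and $\kappa'(G_0) \leq \delta^0(G_0) = k$ as well.

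The only genuinely non-trivial step will be establishing $\kappa(G_0) \geq k$, which, via $\kappa \leq \kappa' \leq \delta^0 = k$, upgrades both connectivities to exact equalities and puts $G_0$ in $\mathcal{D}_{n,k} \cap \mathcal{L}_{n,k}$. This can be settled either by invoking the classical fact that a connected Cayley digraph on an abelian group has vertex connectivity equal to its out-degree, or by an explicit Menger-style construction of $k$ internally vertex-disjoint directed paths between any two vertices of $G_0$ using distinct arithmetic shifts in $\mathbb{Z}_n$. Once this is in hand, combining the two halves yields $\min\{\lambda_\alpha(G): G \in \mathcal{D}_{n,k}\} = \min\{\lambda_\alpha(G): G \in \mathcal{L}_{n,k}\} = k$, with $k$-regularity characterizing the minimizers in either family.
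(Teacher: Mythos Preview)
Your lower-bound half is exactly the paper's argument: it is precisely the paragraph preceding the theorem, which applies Lemma~\ref{le:1} to get $\delta^+\leq\lambda_\alpha(G)$, uses $\delta^+,\delta^-\geq k$ from the connectivity hypotheses, and then forces $k$-regularity in the equality case via the indegree constraint. That is the entirety of what the paper offers; it states the theorem immediately afterwards and gives no existence construction at all.

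Your second half therefore goes beyond the paper. The paper tacitly assumes that $k$-regular members of $\mathcal{D}_{n,k}$ and $\mathcal{L}_{n,k}$ exist, whereas you supply an explicit witness. The circulant $C_n(1,2,\ldots,k)$ works, and the step you flag as non-trivial, $\kappa(G_0)\geq k$, admits a short direct proof that avoids appealing to general Cayley-digraph theorems: if $|S|\leq k-1$ vertices are deleted, then any maximal block of consecutive deleted vertices has length at most $k-1$, so the cyclic gap between successive surviving vertices is at most $k$; since the connection set is $\{1,\ldots,k\}$, each surviving vertex has an arc to the next surviving one in cyclic order, and the survivors lie on a directed Hamiltonian cycle. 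This gives $\kappa(G_0)=k$, and then $\kappa'(G_0)=k$ follows from $\kappa\leq\kappa'\leq\delta^0=k$. So your proposal is correct and, on the existence side, strictly more complete than the paper's treatment.
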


\end{document}